\definecolor{deepblue}{rgb}{0,0,0.5}
\definecolor{deepred}{rgb}{0.6,0,0}
\definecolor{deepgreen}{rgb}{0,0.5,0}
	\definecolor{DarkBlue}{rgb}{0.00,0.00,0.55}
	\definecolor{Black}{rgb}{0.00,0.00,0.00}
\newtheorem{theorem}{Theorem}[section]
\newtheorem{lemma}[theorem]{Lemma}
\newtheorem{corollary}[theorem]{Corollary}
\theoremstyle{definition}
\newtheorem{definition}{Definition}[section]
\theoremstyle{remark}
\newtheorem{remark}{Remark}[section]
\newcommand{\TheTitle}{Efficient white noise sampling and coupling for multilevel Monte Carlo with non-nested meshes}
\newcommand{\TheAuthors}{M.~Croci, M.~B.~Giles, M.~E.~Rognes and P.~E.~Farrell}
\title{{\TheTitle}\thanks{\textbf{Funding:} This research is supported by EPSRC grants EP/K030930/1, by the EPSRC Centre For Doctoral Training in Industrially Focused Mathematical Modelling (EP/L015803/1) in collaboration with Simula Research Laboratory, and by the Nordic Council of Ministers through Nordforsk grant \#74756
(AUQ-PDE).}}
\author{
  M. Croci${}^\ddagger$\thanks{Mathematical Institute, University of Oxford, Oxford, UK. (\textbf{\url{matteo.croci@maths.ox.ac.uk}}), (\textbf{\url{patrick.farrell@maths.ox.ac.uk}}), (\textbf{\url{mike.giles@maths.ox.ac.uk}}).}
  \and
  M.~B.~Giles\footnotemark[2]
  \and
  M.~E.~Rognes\thanks{Department for Numerical Analysis and Scientific Computing, Simula Research Laboratory, P.O.~Box 134, 1325 Lysaker, Norway. (\textbf{\url{meg@simula.no}}).}
  \and
  P.~E.~Farrell\footnotemark[2]
}
\DeclareMathOperator{\diag}{diag}
\DeclareMathOperator{\E}{\mathbb{E}}
\DeclareMathOperator{\V}{\mathbb{V}}
\DeclareMathOperator{\spn}{\text{span}}
\DeclareMathOperator{\W}{\dot{W}}
\definecolor{myblue}{RGB}{135, 206, 250}
\newcommand{\R}{\mathbb{R}}
\begin{document}

\maketitle

\begin{abstract}
  \textbf{Abstract:} When solving stochastic partial differential equations (SPDEs)
  driven by additive spatial white noise, the efficient sampling of
  white noise realizations can be challenging. Here, we present a new
  sampling technique that can be used to efficiently compute white
  noise samples in a finite element method and multilevel Monte Carlo
  (MLMC) setting. The key idea is to exploit the finite element matrix
  assembly procedure and factorize each local mass matrix
  independently, hence avoiding the factorization of a large matrix.
  Moreover, in a MLMC framework, the white noise samples must be
  coupled between subsequent levels. We show how our technique can be
  used to enforce this coupling even in the case of non-nested mesh
  hierarchies. We demonstrate the efficacy of our method with
  numerical experiments. We observe optimal convergence rates for the
  finite element solution of the elliptic SPDEs of interest in 2D and
  3D and we show convergence of the sampled field covariances. In a
  MLMC setting, a good coupling is enforced and the telescoping sum is
  respected.
\end{abstract}

\begin{keywords}
  Multilevel Monte Carlo, white noise, non-nested meshes, Mat\'ern Gaussian fields, finite elements, partial differential equations with random coefficients
\end{keywords}

\section{Introduction}

Gaussian fields are ubiquitous in uncertainty quantification to model the uncertainty in spatially dependent parameters. Common applications are in geology, oil reservoir modelling, biology and meteorology \cite{BolinLindgren2011, Harbrecht2016, Lindgren2011, potsepaev2010application}. Here, let $D\subset\R^d$ be an open spatial domain of interest whose closure is a compact subset of $\R^d$. Consider the task of sampling from a zero-mean Gaussian field $u$ of Mat\'ern covariance $\mathcal{C}$:
\begin{align}
\label{eq:Matern}
\mathcal{C}(x,y) = \dfrac{\sigma^2}{2^{\nu-1}\Gamma(\nu)}(\kappa r)^\nu \mathcal{K}_\nu(\kappa r),\ \ r=\Vert x-y\Vert _2,\ \ \kappa = \frac{\sqrt{8\nu}}{\lambda},\ \ x,y\in D,
\end{align}
where $\sigma^2$, $\nu$, $\lambda >0$ are the variance, smoothness parameter and correlation length of the field respectively and $\mathcal{K}_\nu$ is the modified Bessel function of the second kind.

In practice, samples of $u$ are needed only at discrete locations $\bm{x}_1,\dots,\bm{x}_m\in D$ and a simple sampling strategy consists in drawing realizations of a Gaussian vector $\bm{u}\sim\mathcal{N}(0, C)$ with $\bm{u}_i = u(\bm{x}_i)$ and covariance matrix $C_{ij}=\E[u(\bm{x}_i)u(\bm{x}_j)]$. The simplest approach is usually computationally expensive as it requires the factorization of the dense covariance matrix $C\in\R^{m\times m}$. In fact, if we let $\bm{z}\sim\mathcal{N}(0,I)$ be a standard Gaussian vector and we factorize $C = HH^T$ with $H\in\R^{m\times n}$, we can sample $\bm{u}$ as $\bm{u} = H\bm{z}$ since,
\begin{align}
\label{eq:trick}
\E[\bm{u}\bm{u}^T]= \E[H\bm{z}(H\bm{z})^T]=H\E[\bm{z}\bm{z}^T]H^T= HIH^T=C.
\end{align}
A basic form of this approach uses the Cholesky factorization of $C$. In this case $H$ is dense and the factorization has a computational complexity of $O(m^3)$. If the field is smooth enough so that the eigenvalues of $C$ are rapidly decaying this method can be made competitive by using a low-rank approximation instead \cite{Harbrecht2012}. Usually $n$ is taken to be equal to $m$ so that $H$ is square. However this is not necessary for \eqref{eq:trick} to hold. For instance, the sampling strategy we present in this work uses $n>m$. 

Another family of sampling approaches is based on the expansion of the field $u$ as a (possibly finite or truncated) series of basis functions. Different choices of bases yield different methods. Common choices are the basis of the eigenfunctions of $\mathcal{C}(x,y)$ (Karhunen-Lo\`eve expansion), the Fourier basis (circulant embedding \cite{dietrich1997fast}), or a finite element basis (\cite{Bolin2017,Drzisga2017,Lindgren2011,Zhang2016}). The former method is the most flexible as it can be used to sample Gaussian vectors with arbitrary covariance structure. However, it requires either the solution of a dense eigenvalue problem or the factorization of a dense covariance matrix \cite{Harbrecht2012}. If only the largest eigenvalues or a low-rank factorization are needed, this approach is reasonably efficient \cite{Harbrecht2012}. However, if the eigenvalues of $\mathcal{C}(x,y)$ decay slowly ($\nu$ small), such operations become expensive as more terms are needed in the expansion. Circulant embeddings are exact and more efficient, but rely on the use of the fast Fourier transform, the computation of which typically requires simple geometries and uniform structured meshes.

In this paper we consider the finite element basis method. Whittle showed in \cite{Whittle1954} that a Mat\'ern field with covariance given by \eqref{eq:Matern} is the statistically stationary solution that satisfies the linear elliptic PDE,
\begin{align}
\label{eq:white_noise_eqn}
\left(\mathcal{I} - \kappa^{-2}\Delta\right)^{k}u(x,\omega) = \eta \W(\cdot, \omega),\quad x\in \R^d,\quad \omega\in\Omega,\quad \nu = 2k - d/2 > 0,
\end{align}
where $\W$ is spatial Gaussian white noise in $\R^d$, $k>d/4$ and $\Omega$ is a suitable sample space. The notation $\W(\cdot, \omega)$ indicates that $\W$ is almost surely a generalised function (on $\mathbb{R}^d$). Here $\eta$ is a scaling factor that depends on $\sigma$, $\lambda$ and $\nu$, $d\leq 3$ and the equality has to hold almost surely and be interpreted in the sense of distributions. Boundary conditions are not needed as the stationarity requirement is enough for well-posedness \cite{Lindgren2011}. Equation \eqref{eq:white_noise_eqn} has to be solved on the whole $\R^d$. However this is generally not feasible and $\R^d$ is in practice truncated to a bounded domain $D$. In this case, artificial boundary conditions must be prescribed on $\partial D$. Homogeneous Dirichlet or Neumann boundary conditions are often chosen \cite{Bolin2017,Lindgren2011}, although it usually does not matter for practical purposes as the error in the covariance of the field decays rapidly away from the boundary \cite{potsepaev2010application}. After the meshing of $D$, \eqref{eq:white_noise_eqn} can be solved in linear time with the finite element method (FEM) and an optimally preconditioned Krylov solver.
This approach thus scales well in terms of problem size and parallel computation \cite{Drzisga2017}. Moreover, the approach is especially convenient if $u$ appears as a coefficient in a PDE which is solved using the FEM as it might be possible to reuse finite element bases and computations for both equations.

The main focus of this paper is the generation of white noise samples $\W(\cdot, \omega)$ for a given sample point $\omega\in\Omega$. More precisely, we study the efficient sampling of the action $\langle \W, v_h \rangle (\omega)$ of white noise onto a FEM test function $v_h$. In this work we specifically consider equation \eqref{eq:white_noise_eqn} for the sake of simplicity. However, the sampling techniques we describe apply to a wider range of SPDEs with additive spatial white noise forcing (e.g.~see \cite{Du2002,Zhang2016}). While solving such equations is relatively straightforward, the sampling of white noise realizations is not as it requires the sampling of a Gaussian vector with a finite element mass matrix $M$ as covariance. If the finite element spaces involved are other than piecewise constants, $M$ will be sparse, but not diagonal. Hence, its Cholesky factor is usually dense and the sampling requires an offline computational and memory storage cost of $O(m^3)$ for the factorization and an online cost of $O(m^2)$ for each sample.

To resolve this challenge, different approaches have been adopted in the literature. Generally, the idea has been to use a diagonal mass matrix instead; i.e.~an approximate representation using piecewise constants or mass-lumping. Osborn et al.~\cite{Osborn2017} use a two-field reformulation of \eqref{eq:white_noise_eqn} for $k=1$ with Raviart-Thomas elements combined with piecewise constants, while Lindgren et al.~\cite{Lindgren2011} use continuous Lagrange elements and mass lumping. Both methods compute (or approximate) the action of white noise on the FEM test functions. Another option, adopted in \cite{Drzisga2017,Du2002,potsepaev2010application}, is to approximate the white noise itself by a piecewise constant random function that converges in an appropriate weak sense to the exact white noise.

The sampling becomes more complicated when the Mat\'ern field $u$ is needed within a multilevel Monte Carlo (MLMC) framework \cite{giles2008,giles2015multilevel} which requires the coupling of the field between different approximation levels (i.e.~the same sample point $\omega$ must be used on both levels). In turn, this requires the white noise samples on each level to be coupled. Drzisga et al.~\cite{Drzisga2017} enforce this coupling in the nested grid case with the use of a piecewise constant approximation of white noise \cite{Scheichl2017}. Osborn et al.~\cite{Osborn2017} present a technique that enforces the coupling between nested meshes by using techniques from element-based algebraic multigrid (AMG). Their approach does not require a user-provided hierarchy of nested grids as the hierarchy is constructed algebraically. This operation aggregates the elements of a single user-provided grid into clusters which then constitute the elements of the coarse meshes. The resulting aggregated meshes are non-simplicial. Furthermore, Osborn et al.~\cite{Osborn2017scalable} use a hierarchy of nested structured grids on which they enforce the white noise coupling and solve the SPDE \eqref{eq:white_noise_eqn}. The techniques used for the coupling are the same as presented in \cite{Osborn2017}. The sampled Mat\'ern fields are then transferred to a non-nested agglomerated mesh of the domain of interest via a Galerkin projection.

The main contributions of this paper are the following. First, we present a sampling technique for white noise that is exact and that is applicable for a wide range of finite element families, including all types of Lagrange elements. Our technique does not require the expensive factorization of a global mass matrix or a costly two-field splitting of the Laplacian and has linear complexity in the number of degrees of freedom. Second, we introduce a coupling technique for coupling white noise between nested or non-nested meshes, applicable for the same class of finite element families. If non-nested meshes are used, this coupling technique requires the use of a supermesh construction \cite{PatrickPHD,Farrell2011Supermesh,Farrell2009Supermesh}. Third, the existing literature generally focuses on white noise coupling in the $h$-refinement case, i.e.\ when the MLMC hierarchy is defined by meshes of decreasing element size \cite{CharrierMLMC2013,Cliffe2011}. In this paper we also consider the case in which the MLMC levels are defined by increasing the polynomial degree of the FEM interpolant ($p$-refinement).

Although Osborn et al.~\cite{Osborn2017} also work with non-nested meshes, our approach differs significantly from theirs. Osborn et al.~start from one single mesh and algebraically coarsen it to obtain the grid hierarchy. The MLMC levels are thus generated algebraically. Our approach operates on a given arbitrary mesh hierarchy and the MLMC levels are defined geometrically. In our case, every mesh in the hierarchy is simplicial, and it is thus possible to use standard FEM error estimates (if available) to estimate \textit{a priori} the MLMC convergence parameters \cite{CharrierMLMC2013,TeckentrupMLMC2013}.

We adopt the same embedded domain strategy as Osborn et al.~\cite{Osborn2017}. The advantage of this strategy is that the sampled Mat\'ern field can be transferred to the computational domain of interest exactly and at negligible cost. However, in practical applications defined over complex geometries, a sequence of nested meshes might not be available, making the white noise coupling challenging. This motivated us to design an algorithm that can be used to enforce the coupling between non-nested meshes as well.

The paper is structured as follows. In Section \ref{sec:notation} we introduce the notation we use and we give a brief description of the MLMC method. In Section \ref{sec:sec2} we present an overview of the white noise SPDE sampling approach for Mat\'ern fields and we suggest a simple FEM scheme for the solution of \eqref{eq:white_noise_eqn}. Moreover, we describe the white noise sampling problem for the cases where both independent and coupled realizations are needed. In Section \ref{sec:white_noise_sampling} we describe our new sampling technique that allows the sampling of independent and coupled white noise realizations efficiently. In Section \ref{sec:num_results} we present numerical results corroborating the theoretical results and demonstrating the performance of the technique. Finally we summarize the results of the paper in Section \ref{sec:conclusions}.

\section{Notation and preliminaries}
\label{sec:notation}
\subsection{Notation}
In this paper we adopt the following notation.

$\bm{L^2}$ \textbf{inner product.} For an open domain $D \subseteq \R^d$, we let $(\cdot, \cdot)$
denote the $L^2(D)$ inner product where $L^2(D)$ is the standard
Hilbert space of square-integrable functions on $D$.

\textbf{Real-valued random variables.} For a given $\sigma$-algebra $\mathcal{A}$ and probability measure $\mathbb{P}$ let $(\Omega,\mathcal{A},\mathbb{P})$ be a probability space and let $L^2(\Omega, \R)$ indicate the space of real-valued random variables of finite second moment.

\textbf{Generalized stochastic fields.} Following the definition introduced by It\^{o} \cite{Ito1954} we denote with ${\mathscr{L}(L^2(D), L^2(\Omega, \R))}$ the space of generalized stochastic fields that are continuous linear mappings from $L^2(D)$ to $L^2(\Omega,\R)$. For a given $\xi\in \mathscr{L}(L^2(D), L^2(\Omega, \R))$ we indicate the action of $\xi$ onto a function $\phi\in L^2(D)$ with the notation $\xi(\phi) = \langle \xi, \phi \rangle$.

\textbf{Subsets of compact closure.} Given an open domain $G\subseteq D$, we write $G\subset\subset D$ to indicate that the closure of $G$ is a compact subset of $D$.

\textbf{Nested and non-nested meshes.} Let $T_a$ and $T_b$ be two tessellations of $D$. We say that $T_a$ is nested within $T_b$ if $\textnormal{vertices}(T_a)\subseteq \textnormal{vertices}(T_b)$ and if for each element $e\in T_a$ there exists a set of elements $E\subseteq T_b$ such that $e=\bigcup_{\hat{e}_i\in E}\hat{e}_i$. We say that $T_a$ and $T_b$ are non-nested if $T_a$ is not nested within $T_b$ and vice-versa.

Additionally, we will use the following definition of \textbf{white noise}.
\begin{definition}[White noise, see example 1.2 and lemma 1.10 in \cite{Hida1993}]
	\label{def:white_noise}
	Let $D\subseteq\R^d$ be an open domain. The white noise $\W\in \mathscr{L}(L^2(D), L^2(\Omega, \R))$ is a generalized stochastic field such that for any collection of $L^2(D)$ functions $\{\phi_i\}$, if we let $b_i = \langle \W, \phi_i \rangle$, then $\{b_i\}$ are joint Gaussian random variables with zero mean and covariance given by $\mathbb{E}[b_ib_j]=(\phi_i,\phi_j)$.
\end{definition}

\subsection{The multilevel Monte Carlo method}
Let $u(x,\omega)$ for $x\in \R^d$, $\omega\in \Omega$ be the solution of an SPDE of interest, e.g.~\eqref{eq:white_noise_eqn}. Generally we are interested in computing an output functional $P$ of $u$, namely
\begin{align}
\label{eq:output_functional}
P(\omega) = \mathcal{P}[x,u(x,\omega)](\omega).
\end{align}
Here we assume that $P(\omega)$ is scalar-valued with
bounded second moment, i.e.\ $P\in L^2(\Omega, \R)$.

For instance, $\mathcal{P}$ could be the average or the $L^2$ norm of $u$ over its domain of definition. A more complicated, yet common, case is when the computation of $\mathcal{P}$ requires the solution of an additional equation. A typical example is when $u$ is a Mat\'ern field satisfying \eqref{eq:white_noise_eqn} and $u$ appears in the permeability coefficient of another elliptic PDE \cite{Cliffe2011}. In this case $\mathcal{P}$ is usually a functional of the solution of the latter equation \cite{Cliffe2011}. Although the techniques we describe apply to a wider range of problems, this is the framework considered in this paper.

To quantify the propagation of uncertainty from the input $u$ to the output functional
of interest $P$, one may estimate the expected value $\E$ and variance
$\V$ of $P(\omega)$. When $P$ can be approximated at different levels $\ell=1,\dots,L$ of increasing accuracy and cost, $\E[P]$ and $\V[P]$ can be estimated efficiently with the multilevel Monte Carlo (MLMC) method \cite{giles2008}. Letting $P_\ell$ be the approximated value at level $\ell$, we can approximate $\E[P]$ as
\begin{align}
\label{eq:telescoping_sum}
\E[P] \approx \E[P_L] = \sum\limits_{\ell = 1}^L \E[P_\ell - P_{\ell-1}],\quad P_0 \equiv 0.
\end{align}
The telescoping sum on the right hand side is at the heart of the MLMC strategy: by approximating each term in the sum with standard MC we obtain the MLMC estimator,
\begin{align}
\label{eq:MLMC_estimator}
\E[P] \approx \bar{P} = 
\sum\limits_{\ell = 1}^L\left[\frac{1}{N_\ell}\sum\limits_{n=1}^{N_\ell} \left (P_\ell(\omega^n_\ell) - P_{\ell  - 1}(\omega^n_\ell) \right )\right],
\end{align}
where $\omega^n_\ell\in\Omega$ is the $n$-th sample point on level $\ell$.

In the context of approximating $u$ and \emph{a fortiori} $P$ using a finite element method, the levels of accuracy can be defined by using a hierarchy of meshes ($h$-refinement) or by increasing the polynomial degree of the finite elements used ($p$-refinement). As the variance is yet another expectation, the same strategy as described here for $\E[P]$ applies to $\V[P]$. 

The increased efficiency of MLMC with respect to standard Monte Carlo relies on the assumption that on coarse levels (small $\ell$), many samples are needed for an accurate estimate of the expected value, but each sample is inexpensive to compute. On fine levels (large $\ell$) sampling is expensive, but the variance is small due to the fact that the levels are coupled, i.e.~the sample point $\omega^n_{\ell}$ is the same for both $P_\ell(\omega^n_\ell)$ and $P_{\ell  - 1}(\omega^n_\ell)$. The coupling makes $P_\ell(\omega^n_\ell)$ and $P_{\ell  - 1}(\omega^n_\ell)$ strongly correlated. This aspect diminishes the variance of their difference, and therefore fewer samples are required to estimate the expected value. An alternative interpretation is that once $\omega^n_{\ell}$ is fixed, the two terms $P_\ell(\omega^n_\ell)$ and $P_{\ell  - 1}(\omega^n_\ell)$ are two approximations of different accuracy of the same deterministic problem and hence their difference becomes smaller as the discretization approaches the infinite dimensional solution.

MLMC can be seen as a variance reduction technique in which the coupling between the levels is one of the key elements. If the coupling is not enforced correctly so that the samples of $P_\ell$ and $P_{\ell-1}$ become independent, then the variance of each term of the telescoping sum in \eqref{eq:MLMC_estimator} increases, significantly harming its efficiency and convergence properties.

The convergence and cost of the MLMC estimator \eqref{eq:MLMC_estimator} is given by the following theorem:
\begin{theorem}[MLMC convergence \cite{Cliffe2011, giles2008}]
	\label{th:MLMC_convergence}
	Let $C_\ell$ be the cost of computing one sample of $P_\ell$ on level $\ell$. Suppose that there are positive constants $\alpha$, $\beta$, $\gamma$ such that $\alpha\geq\min(\beta,\gamma)$ and
	\begin{enumerate}[label={\arabic*)}, leftmargin=1cm]
		\item $|\E[P_\ell - P]|\hspace{6pt} \lesssim 2^{-\alpha\ell}$,
		\item $\V[P_\ell - P_{\ell-1}] \lesssim  2^{-\beta\ell}$,
		\item $C_\ell \hspace{45pt} \lesssim 2^{\gamma\ell}$.
	\end{enumerate}
	Then, for any $\varepsilon < e^{-1}$, there exists a value $L$ and a sequence $\{N_\ell\}_{\ell=1}^L$ such that,
	\begin{align}
	\label{eq:MSE}
	\textnormal{error}(\bar{P}) \equiv \E[(\bar{P} - \E[P])^2]^{1/2} \leq \varepsilon,
	\end{align}
	and the total cost $C_{\text{tot}}$ satisfies
	\begin{align}
	C_{\text{tot}} := \sum\limits_{\ell=1}^L N_\ell C_\ell \lesssim \left\{\begin{array}{lr} \varepsilon^{-2}, & \beta > \gamma,\\\varepsilon^{-2}(\log\varepsilon)^2, & \beta = \gamma,\\\varepsilon^{-2 - (\gamma - \beta)/\alpha}, & \beta < \gamma.\end{array}\right.
	\end{align}
\end{theorem}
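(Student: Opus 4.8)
The plan is to follow the by-now classical argument of \cite{giles2008, Cliffe2011}: split the mean-square error into a squared bias and a variance, then solve a constrained cost-minimization problem for the sample sizes $\{N_\ell\}$. Since $\bar P$ is an unbiased estimator of $\E[P_L]$ and the per-level terms in \eqref{eq:MLMC_estimator} use independent samples,
\begin{align}
\E[(\bar P - \E[P])^2] = (\E[P_L - P])^2 + \sum_{\ell=1}^L \frac{V_\ell}{N_\ell}, \qquad V_\ell := \V[P_\ell - P_{\ell-1}],
\end{align}
and I would ask each of the two terms on the right to be at most $\varepsilon^2/2$. For the bias, assumption 1) gives $|\E[P_L - P]| \lesssim 2^{-\alpha L}$, so it suffices to take $L = \lceil \alpha^{-1}\log_2(c\,\varepsilon^{-1}) \rceil$ for a suitable constant $c$; the hypothesis $\varepsilon < e^{-1}$ makes $L \ge 1$, and this choice gives $2^{\gamma L} \lesssim \varepsilon^{-\gamma/\alpha}$ and $L \lesssim \log_2 \varepsilon^{-1}$.

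For the variance I would minimize the cost $\sum_\ell N_\ell C_\ell$ subject to $\sum_\ell V_\ell / N_\ell \le \varepsilon^2/2$. Relaxing $N_\ell$ to real values, a Lagrange-multiplier computation gives the optimal proportionality $N_\ell \propto \sqrt{V_\ell/C_\ell}$; rounding up suggests the choice
\begin{align}
N_\ell = \left\lceil 2\varepsilon^{-2}\sqrt{V_\ell/C_\ell}\,\sum_{k=1}^L\sqrt{V_k C_k}\, \right\rceil,
\end{align}
for which the variance constraint holds by construction and every $N_\ell \ge 1$. Absorbing the ceilings and using assumptions 2) and 3) in the form $\sqrt{V_\ell C_\ell} \lesssim 2^{(\gamma-\beta)\ell/2}$,
\begin{align}
C_{\text{tot}} = \sum_{\ell=1}^L N_\ell C_\ell \le 2\varepsilon^{-2}\left(\sum_{\ell=1}^L \sqrt{V_\ell C_\ell}\,\right)^2 + \sum_{\ell=1}^L C_\ell \lesssim \varepsilon^{-2}\left(\sum_{\ell=1}^L 2^{(\gamma-\beta)\ell/2}\right)^2 + 2^{\gamma L}.
\end{align}
The geometric sum is $O(1)$ when $\beta > \gamma$, $O(L) = O(\log\varepsilon^{-1})$ when $\beta = \gamma$, and $O(2^{(\gamma-\beta)L/2})$ when $\beta < \gamma$; squaring and substituting $2^L \lesssim \varepsilon^{-1/\alpha}$ produces exactly the three claimed regimes for the first term.

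The only delicate point — and the main obstacle — is the residual term $\sum_\ell C_\ell \lesssim 2^{\gamma L} \lesssim \varepsilon^{-\gamma/\alpha}$ coming from the ceiling functions: one must verify in each regime that it is dominated by the first term, and this is exactly where the assumption $\alpha \ge \min(\beta,\gamma)$ is used. When $\beta \ge \gamma$ one has $\gamma/\alpha \le 1 < 2$, so $\varepsilon^{-\gamma/\alpha}$ is dominated by $\varepsilon^{-2}$; when $\beta < \gamma$ one has $\alpha \ge \beta$, hence $\gamma/\alpha \le 2 + (\gamma-\beta)/\alpha$, so $\varepsilon^{-\gamma/\alpha}$ is dominated by $\varepsilon^{-2-(\gamma-\beta)/\alpha}$. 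Everything else — the error splitting, the Lagrange-multiplier optimization, and the summation of geometric series — is routine.
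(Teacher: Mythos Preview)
The paper does not give its own proof of this theorem; it simply states the result and cites \cite{Cliffe2011,giles2008}. Your argument is precisely the classical one from those references --- the bias/variance splitting of the mean-square error, the Lagrange-multiplier choice $N_\ell \propto \sqrt{V_\ell/C_\ell}$, and the case analysis of the geometric sum $\sum_\ell 2^{(\gamma-\beta)\ell/2}$ --- and it is correct as written. Your handling of the ceiling residual $\sum_\ell C_\ell \lesssim \varepsilon^{-\gamma/\alpha}$ via the hypothesis $\alpha \ge \min(\beta,\gamma)$ is also correct in each regime, so there is nothing to add.
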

The values of $L$ and $\{N_\ell\}_{\ell=1}^L$ are estimated automatically in standard MLMC algorithms, for further details see \cite{giles2008,giles2015multilevel}. The values of the MLMC parameters $\alpha$, $\beta$, $\gamma$ are sometimes known \textit{a priori} \cite{CharrierMLMC2013,TeckentrupMLMC2013}, otherwise they need to be estimated. In the uniform $h$-refinement case, we have $\gamma=d$ and $h_\ell \sim 2^{-c\ell}$, where $h_\ell$ is the level $\ell$ mesh size and $c>0$ \cite{Cliffe2011}.

\section{The finite element approach to Mat\'ern field sampling}
\label{sec:sec2}

$\,$
In this section we describe the practical aspects of the numerical solution of \eqref{eq:white_noise_eqn} when either independent (standard Monte Carlo) or coupled (MLMC) Mat\'ern field samples are needed. As we will see, the main complication lies in the sampling of white noise realizations.

Note that the results we present on white noise sampling can be applied to a wider class of elliptic or parabolic SPDEs with additive spatial white noise (e.g.~see \cite{Du2002,Zhang2016}).

\subsection{Finite element solution of elliptic PDEs with white noise forcing}
\label{sec:singleFEMsol}

The solutions of the linear elliptic PDE \eqref{eq:white_noise_eqn}
correspond to a Mat\'ern field with covariance given by
\eqref{eq:Matern}. In this paper we assume $k$ to be a positive
integer, although it is possible to work with non-integer values as
well \cite{BolinLindgren2011}. The scaling factor $\eta$ in
\eqref{eq:white_noise_eqn} is given by
\begin{align}
\eta = \frac{\sigma}{\hat{\sigma}},\quad\text{where}\quad\hat{\sigma}^2 = \dfrac{\Gamma(\nu)\ \nu^{d/2}}{\Gamma(\nu+d/2)}\left(\frac{2}{\pi}\right)^{d/2}\lambda^{-d},
\end{align}
where $\Gamma(x)$ is the Euler gamma function \cite{Lindgren2011}. Note that if $d=2$ then $\hat{\sigma}^2 = (2/\pi)\lambda^{-2}$, and for $\nu\rightarrow\infty$, $\hat{\sigma}^2 = (2/\pi)^{d/2}\lambda^{-d}$.

Solving \eqref{eq:white_noise_eqn} over the whole of $\R^d$ is generally not feasible. Instead, $\R^d$ is typically truncated to a bounded domain $D\subset\joinrel\subset\R^d$ and some boundary conditions are chosen, usually homogeneous Neumann or Dirichlet \cite{Bolin2017,Lindgren2011}. In what follows, we assume that the Mat\'ern field sample is needed on a domain $G\subset\joinrel\subset D$. If $D$ is sufficiently large in the sense that the distance between $\partial D$ and $\partial G$ is larger than the correlation length $\lambda$ then the error introduced by truncating $\R^d$ to $D$ is negligible \cite{Drzisga2017,potsepaev2010application}.

After truncating the domain, \eqref{eq:white_noise_eqn} can be rewritten in the following iterative form,
\begin{align}
\label{eq:iterative_white_noise_eqn}
\begin{dcases}
\begin{array}{lclcl}
u_1 - \kappa^{-2}\Delta u_1 = \eta \W && \text{in }  D &&\\
u_{j+1} - \kappa^{-2}\Delta u_{j+1} = u_j && \text{in }  D,&& j=1,\dots,k-1,\\
u_{j+1}=0 && \text{on } \partial D, && j=0,\dots,k-1,
\end{array}
\end{dcases}
\end{align}
where $u\equiv u_k$. This is the approach suggested by Lindgren et al.~in \cite{Lindgren2011}. As the main focus of this paper is on white noise sampling, we will restrict our attention to the $k=1$ case and we will set $\eta=1$ from now on, in which case \eqref{eq:iterative_white_noise_eqn} reduces to
\begin{equation}
\label{eq:truncated_white_noise_eqn}
\begin{split}
u - \kappa^{-2}\Delta u &= \W \quad \text{in } D, \\
u &= 0 \quad \text{on } \partial D.
\end{split}
\end{equation}
Existence and uniqueness of solutions to~\eqref{eq:truncated_white_noise_eqn} was proven in \cite{Benfatto1980} and in \cite{Buckdahn1989}.

We will solve \eqref{eq:truncated_white_noise_eqn} using the finite element method. Let $V_h=\spn(\phi_1,\dots,\phi_m)\subseteq H^1_0(D)$ be a suitable finite element approximation subspace (e.g.\ the $\phi_i$ could be continuous Lagrange basis functions defined relative to a triangulation $D_h$ of $D$). A discrete weak form of \eqref{eq:truncated_white_noise_eqn} then reads: find $u_h\in V_h$ such that
\begin{align}
(u_h, v_h) + \kappa^{-2}(\nabla u_h,\nabla v_h) = \langle \W, v_h \rangle
\quad \text{for all } v_h\in V_h.
\end{align}

The coefficients of the basis function expansion for $u_h$, i.e.~the $u_i$ such that ${u_h=\sum_{i=1}^mu_i\phi_i}$, are given by the solution of a linear system
\begin{align}
\label{eq:linear_system}
A\bm{u} = \bm{b},\quad\text{with}\quad A_{ij}=(\phi_i,\phi_j) + \kappa^{-2}(\nabla\phi_i,\nabla\phi_j),\quad b_i = \langle \W, \phi_i \rangle.
\end{align}
This linear system \eqref{eq:linear_system} can be solved in $O(m)$ time by using an optimally preconditioned Krylov solver such as the conjugate gradient method preconditioned with geometric or algebraic multigrid. We remark that since the elliptic operator is the same in all equations of \eqref{eq:iterative_white_noise_eqn}, the same finite element basis and solver can be reused to compute all the $u_j$ for the case where $k > 1$. 

By Definition \ref{def:white_noise}, $\bm{b}$ satisfies
\begin{align}
\bm{b}\sim\mathcal{N}(0,M),\quad M_{ij} = (\phi_i,\phi_j),
\end{align}
i.e.~$\bm{b}$ is a zero-mean Gaussian vector with the finite element mass matrix $M$ as covariance matrix. Sampling white noise realizations can thus be accomplished by sampling a Gaussian vector of mass matrix covariance.

In Section \ref{sec:white_noise_sampling}, we present a factorization
of $M$ in the form $H H^T$ (cf.~\eqref{eq:trick}) that is both sparse
and computationally efficient to compute, thus allowing for efficient
sampling of white noise.

\subsection{Multilevel white noise sampling/white noise coupling condition} We now consider the case in which coupled Mat\'ern field realizations are needed in a MLMC setting, i.e.~we want to draw samples of $u_{\ell}(x,\omega)$ and $u_{\ell-1}(x,\omega)$ at two different levels of accuracy $\ell$ and $\ell-1$ for the same $\omega\in \Omega$. Since the only stochastic element present in \eqref{eq:truncated_white_noise_eqn} is white noise, it is sufficient to use the same white noise sample on both levels to enforce the coupling requirement.

More precisely, let $V^\ell$ and $V^{\ell-1}$ be the finite element spaces on level $\ell$ and $\ell-1$ respectively for $\ell > 1$. We consider the following two variational problems coupled by a common white noise sample: find $u_\ell\in V^\ell=\spn(\phi_1^\ell,\dots,\phi^\ell_{m_\ell})$ and $u_{\ell-1}\in V^{\ell-1}=\spn(\phi_1^{\ell-1},\dots,\phi^{\ell-1}_{m_{\ell-1}})$ such that for $\omega^n_\ell\in \Omega$
\begin{align}
\label{eq:coupled1}
(u_{\ell},v_{\ell}) + \kappa^{-2}(\nabla u_{\ell},\nabla v_{\ell})         &= \langle \W,v_{\ell} \rangle (\omega^n_\ell),\quad\hspace{10.5pt}\text{for all } v_{\ell}\in V^\ell,\\
(u_{\ell-1},v_{\ell-1}) + \kappa^{-2}(\nabla u_{\ell-1},\nabla v_{\ell-1}) &= \langle \W,v_{\ell-1} \rangle (\omega^n_\ell),\quad\text{for all } v_{\ell-1}\in V^{\ell-1}.
\label{eq:coupled2}
\end{align}
where the terms on the right hand side are coupled in the sense that they are centered Gaussian random variables with covariance
$\E[\langle \W,v_{l} \rangle \langle \W,v_{s} \rangle] = (v_l,v_s)$ for $l,s\in\{\ell,\ell-1\}$, as given by definition \ref{def:white_noise}.

Let $\bm{u}_\ell\in\R^{m_\ell}$ and $\bm{u}_{\ell-1}\in\R^{m_{\ell-1}}$ be the vectors of the finite element expansion coefficients of $u_\ell$ and $u_{\ell-1}$, respectively. Following the same approach as in Section \ref{sec:singleFEMsol}, we note that the coefficient vectors solve the following block diagonal linear system,
\begin{align}
\left[
\begin{array}{c|c}%
A^\ell & 0 \\\hline\\[-1em]
0 & A^{\ell-1}
\end{array}
\right]
\left[
\begin{array}{l}%
\bm{u}_\ell \\
\bm{u}_{\ell-1}
\end{array}
\right] = 
\left[
\begin{array}{l}%
\bm{b}_\ell \\
\bm{b}_{\ell-1}
\end{array}
\right].
\end{align}
Alternatively, by letting $\bm{u} = [\bm{u}_\ell,\ \bm{u}_{\ell-1}]^T$, $\bm{b} = [\bm{b}_\ell,\ \bm{b}_{\ell-1}]^T$ and $A = \diag(A^\ell,A^{\ell-1})$, we can write this as
\begin{align}
A\bm{u} = \bm{b}. 
\end{align}
This system can be solved in linear time with an optimal solver \cite{elman2014finite}.

Furthermore, by definition \ref{def:white_noise},
\begin{align}
\label{eq:coupling_condition1}
\bm{b} \sim\mathcal{N}(0,M),
\end{align}
where $M$ can be expressed in block structure as
\begin{align}
\label{eq:coupling_condition2}
M = \left[
\begin{array}{c|c}%
M^\ell & M^{\ell,\ell-1} \\\hline\\[-1em]
(M^{\ell,\ell-1})^T & M^{\ell-1}
\end{array}
\right],
\quad\text{with}\quad M^{\ell,k}_{ij} = (\phi_i^\ell,\phi_j^{k})
\text{ and } M^{k}_{ij} = (\phi_i^k,\phi_j^{k}).
\end{align}
If we were using independent white noise samples for \eqref{eq:coupled1} and \eqref{eq:coupled2}, then the off-diagonal blocks of $M$ would vanish. Conversely, the presence of the mixed mass matrix $M^{\ell,\ell-1}$ stems from the use of the same white noise sample on both levels. For this reason, we will refer to equations \eqref{eq:coupling_condition1} and \eqref{eq:coupling_condition2} as the \emph{coupling condition}.

Thus, the problem of sampling coupled Mat\'ern fields in the context of MLMC again reduces to the sampling of a Gaussian vector with a mass matrix as covariance. However, two additional complications arise. First, $M$ is potentially much larger and not necessarily of full rank (consider the case in which $V^\ell = V^{\ell-1}$, then $M^\ell=M^{\ell-1}=M^{\ell,\ell-1}$). Second, to assemble $M^{\ell,\ell-1}$ we need to compute integrals involving basis functions possibly defined over different, non-nested meshes, which is non-trivial. In Section \ref{sec:white_noise_sampling}, we present a sampling technique that addresses both issues. A supermesh construction \cite{Farrell2011Supermesh, Farrell2009Supermesh} is required in the non-nested mesh case.

\subsection{Embedded meshes and non-nested grids}

We adopt the same embedded mesh strategy as presented by Osborn et
al.~\cite{Osborn2017}. We assume that the Mat\'ern field sample is
needed on a user-provided mesh $G_h$ of the domain $G$ and we take $D$
to be a larger $d$-dimensional box such that the distance between
$\partial G$ and $\partial D$ is at least $\lambda$. With modern
meshing software, such as Gmsh \cite{gmsh}, it is possible to then triangulate $D$ and obtain a
mesh $D_h$ in such a way that $G_h$ is nested within $D_h$, i.e., each
element and vertex of $G_h$ is also an element or vertex of $D_h$. We
then refer to $G_h$ as embedded in $D_h$ or to $G_h$ as an embedded
mesh (in $D_h$). The main advantage of an embedded $G_h$ in $D_h$ is
that once \eqref{eq:truncated_white_noise_eqn} is solved on $D_h$ the
sampled Mat\'ern field $u$ can be exactly transferred onto $G_h$ at
negligible cost. Conversely, if $G_h$ is not embedded in $D_h$, an
additional interpolation step would be required, thus increasing the
cost of each sample.

In the MLMC framework with $h$-refinement, we assume that we are given a possibly non-nested user-provided mesh hierarchy $\{G^\ell_h\}_{\ell=1}^L$. We accordingly generate a hierarchy of meshes $\{D^\ell_h\}_{\ell=1}^L$ on which to perform the sampling. If the $\{D^\ell_h\}_{\ell=1}^L$ are nested, then the techniques used in \cite{Drzisga2017} and \cite{Osborn2017} can be used to couple the white noise between MLMC levels. However, in the case in which the user-provided meshes $\{G^\ell_h\}_{\ell=1}^L$ are non-nested, these methods are not compatible with the embedded mesh strategy.

Clearly, non-nested grid hierarchies appear naturally in practical computations on complex geometries. For instance, grid hierarchies generated from CAD geometries or through coarsening of a single fine mesh are generally non-nested. Thus tackling couplings across non-nested meshes is crucial for non-trivial applications. As we will see in the next section, such couplings can be achieved at a small offline cost.

\section{White noise sampling}
\label{sec:white_noise_sampling}
In this section we introduce a new technique for sampling white noise efficiently. We first address the basic case in which independent white noise samples are needed before considering the more complicated case in which coupled samples are required.

\subsection{Sampling of independent white noise realizations}
\label{sec:independent_white_noise_samples}
As discussed in the previous section, the sampling of independent white noise realizations defined over a meshed domain can be cast as the sampling of a Gaussian vector $\bm{b}$ of covariance matrix given by a finite element mass matrix $M \in \R^{m \times m}$. In turn, efficient sampling of such a Gaussian vector typically involves computing a factorization of $M=HH^T$. If a Cholesky factorization is used, such sampling may become costly, with a $O(m^3)$ factorization cost and $O(m^2)$ cost per sample. In what follows, we present an alternative factorization strategy which has $O(m)$ fixed cost and $O(m)$ cost per sample.

The core idea is to work element-wise instead of factorizing a global mass matrix. To illustrate, consider a standard finite element assembly of $M$ over a mesh with $n$ elements (i.e.~cells) and $m_e$ degrees of freedom on each element. Local mass matrices $M_e$ of size $m_e\times m_e$ are computed on each mesh element $e$ before aggregation to form the global mass matrix $M$. The overall assembly operation can be written in matrix form,
\begin{align}
M = L^T \diag_e(M_e) L,
\end{align}
(see e.g.~\cite{Wathen1987}), where $\diag_e(M_e)$ is a block diagonal matrix of size $nm_e\times nm_e$ with the local mass matrices on the diagonal and $L$ is a Boolean assembling matrix of size $nm_e\times m$ such that $L^T = [L_1^T\ \dots\ L_{n}^T]$ and the $L_e$ are Boolean matrices of size $m_e\times m$ that encode the local-to-global map. Note that each row of $L$ has exactly one non-zero entry \cite{Wathen1987}.

We can now factorize each local mass matrix $M_e$ independently with a standard Cholesky factorization to obtain $M_e=H_eH_e^T$ for each element $e$. We then have
\begin{align}
M = L^T\diag_e(H_eH_e^T)L = (L^T\diag_e(H_e)) (L^T\diag_e(H_e))^T = HH^T,
\end{align}
with $H \equiv L^T\diag_e(H_e)$, and we can sample $\bm{b}$ by computing
\begin{align}
\bm{b} = Hz,\quad\text{with}\quad z\sim\mathcal{N}(0,I),\quad z\in\R^{m_en},
\end{align}
since, cf.~\eqref{eq:trick},
\begin{align}
\E[\bm{b}\bm{b}^T] = H\E[\bm{z}\bm{z}^T]H^T= (L^T\diag_e(H_e))I(L^T\diag_e(H_e))^T=M.
\end{align}
\begin{remark}
	\label{obs:global_to_local}
	This sampling strategy allows the splitting of a large global sampling problem into separate small local sampling problems. In fact, if for each element $e$ we let $\bm{z}_e\sim\mathcal{N}(0,I)$ be a small standard Gaussian vector of length $m_e$, we can rewrite $\bm{b} = H\bm{z}$ as
	\begin{align}
	\label{eq:trick_global_to_local}
	\bm{b} = H\bm{z} = \sum\limits_{e=1}^nL^T_eH_e\bm{z}_e = \sum\limits_{e=1}^nL^T_e\bm{b}_e,
	\end{align}
	where $\bm{b}_e\sim\mathcal{N}(0,M_e)$ is sampled locally. The problem of sampling a global mass matrix covariance Gaussian vector then eventually reduces to the sampling of $n$ independent local mass matrix covariance Gaussian vectors. This sampling approach is therefore trivially parallelizable. 
\end{remark}

Note that this sampling strategy is efficient since the local Cholesky factorizations can be computed in $O(m_e^3n)$ time and the $L_e$ factors can be applied matrix-free for a total $O(m_e^3n)$ factorization cost and an $O(m_e^2n)$ memory and sampling cost. 

\begin{remark}
	\label{remark:Lagrange}
	In the case in which the transformation to the reference element is affine (such as with Lagrange elements on simplices) this operation can be made much more efficient by noting that the local mass matrices on each element are always the same up to a multiplicative factor, namely $M_e/|e| =$ const for all $e$, where $|e|$ is the measure of the element. It is therefore sufficient to factorise a single local mass matrix and to store its Cholesky factor, yielding a negligible $O(m_e^3)$ and $O(m_e^2)$ factorization and memory cost respectively.
\end{remark}

We note that the standard Gaussian vector $\bm{z}$ used to compute $\bm{b}$ is of size $m_en$ which is larger than if a Cholesky factorization was used\footnote{A Cholesky factor would be of size $m \times m$, yielding a standard Gaussian vector of length $m$.}. In fact, unlike the Cholesky factor, the matrix $H$ here is not square. However, in comparison to the cost of solving \eqref{eq:truncated_white_noise_eqn}, the sampling cost of the extra Gaussian variables is negligible.

\subsection{Sampling coupled white noise realizations for MLMC}

We now turn to consider the case of sampling coupled white noise. In
what follows we consider the general setting in which the MLMC levels
are defined using $h$-refinement and the mesh hierarchy is
non-nested. At the end of the section, we provide some remarks on the
simpler cases in which the function spaces that define the hierarchy
are nested (e.g.~the grids are nested or $p$-refinement is used).

\subsubsection{Supermesh construction and global mass matrix assembly}

Consider the case of sampling $\bm{b}\sim\mathcal{N}(0,M)$, where $M\in\R^{m\times m}$ is given by \eqref{eq:coupling_condition2}. The assembly of the off-diagonal blocks of $M$ requires the computation of inner products between basis functions of different FEM approximation subspaces. To address this problem, we use a \emph{supermesh} construction defined as follows.
\begin{definition}[Supermesh, \cite{Farrell2011Supermesh, Farrell2009Supermesh}]
	Let $D\subset\joinrel\subset\R^d$ be an open domain and let $T_a$, $T_b$ be two tessellations of $D$. A supermesh $S$ of $T_a$ and $T_b$ is a common refinement of $T_a$ and $T_b$. More specifically, $S$ is a triangulation of $D$ such that:
	\begin{enumerate}[leftmargin=1cm]
		\item $\textnormal{vertices}(T_a) \cup \textnormal{vertices}(T_b) \subseteq \textnormal{vertices}(S)$,
		\item $\textnormal{volume}(e_S \cap e) \in \{0, \textnormal{volume}(e_S)\}$ for all elements $e_S\in S$, $e\in (T_a \cup T_b)$.
	\end{enumerate}
\end{definition}
The first condition means that every parent mesh vertex must also be a vertex of the supermesh, while the second states that every supermesh element is completely contained within exactly one element of either parent mesh \cite{Farrell2009Supermesh}. As stated in \cite[Lemma 2]{Farrell2009Supermesh}, supermesh elements always lie within the intersection of a single pair of parent mesh elements. The supermesh construction is not unique \cite{Farrell2009Supermesh}. We show an example of supermesh construction in Figure \ref{fig:supermesh}. Efficient algorithms for computing the supermesh are available \cite{libsupermesh-tech-report}. 

\begin{remark}[On the complexity of the supermesh construction]
	If the supermesh construction is performed with a local supermeshing algorithm, then its complexity is $O(n_\ell + K)$, where $n_\ell$ is the number of elements of $D_h^\ell$ and $K$ is the number of intersecting elements \cite{PatrickPHD}. The number of supermesh elements is proportional to the number of intersecting elements and it is therefore $O(K)$. Theoretically, $K$ is bounded by $K \leq c(d) n_\ell n_{\ell-1}$, where $n_{\ell-1}$ is the number of elements of $D_h^{\ell-1}$, $c = 4$ in 2D and $c = 45$ in 3D \cite{PatrickPHD, Farrell2009Supermesh}. In practice, this is a pessimistic bound. If a typical element of the first mesh intersects with $\bar{k}$ elements of the second mesh, then the total number of intersections is $K=O(\bar{k}n_\ell)$. For instance, the libsupermesh software \cite{libsupermesh-tech-report} uses heuristics to eliminate near-degenerate supermesh elements and ensures that $\bar{k}$ is always bounded by a constant. For practical computations, the supermesh construction and the number of supermesh elements is therefore $O(n_\ell)$.
\end{remark}

\begin{figure}[h!]
	\centering
	\includegraphics[trim=0cm 0cm 0cm 0cm, clip=true, width=.8\textwidth]{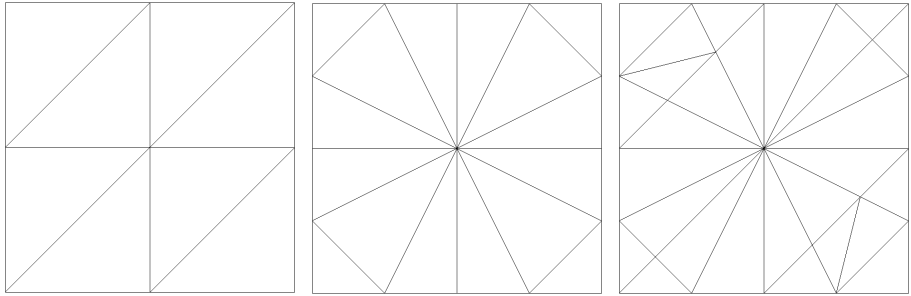}
	\centering
	\caption{An example of a supermesh construction. The first two meshes on the left are the parent meshes and the mesh on the right is a supermesh. As stated in \cite[Lemma 2]{Farrell2009Supermesh}, every supermesh element is completely contained within a unique pair of parent mesh elements.}
	\label{fig:supermesh}
\end{figure}

Evaluating \eqref{eq:coupling_condition2} involves $L^2$-inner products of functions that are only \emph{piecewise} polynomial on each element of $D_h^\ell$ and $D_h^{\ell-1}$. This lack of smoothness affects the convergence of standard quadrature schemes. The supermesh construction provides a resolution to this problem: on each element of a supermesh of $D_h^\ell$ and $D_h^{\ell-1}$ the integrands are polynomial and standard quadrature schemes apply.

Overall, our strategy for evaluating \eqref{eq:coupling_condition2} is to
construct a supermesh of each pair of meshes $D_h^\ell$, $D_h^{\ell-1}$. Note that since each supermesh element lies in the intersection of exactly one pair $(e_\ell, e_{\ell-1})$ of parent mesh elements $e_\ell\in D_h^\ell$, $e_{\ell-1}\in D_h^{\ell-1}$, we only need to account for the basis functions that are non-zero over $e_\ell$ and $e_{\ell-1}$. Let $m_{e_\ell}$ and $m_{e_{\ell-1}}$ denote the number of degrees of freedom defined by the finite element spaces $V^{\ell}$ and $V^{\ell-1}$ over elements $e_\ell$ and $e_{\ell-1}$ respectively. Then, only the inner products between $m_e = m_{e_\ell} + m_{e_{\ell-1}}$ basis functions will be non-zero.

We can thus assemble $M$ given by \eqref{eq:coupling_condition2} by the following two-step algorithm.
\begin{enumerate}[leftmargin=1cm]
	\item Let $n$ be the number of supermesh elements. For each supermesh element $e$, use quadrature rules over $e$ to compute the local mass matrix
	\begin{align}
	\label{eq:local_supermesh_mass_matrix}
	M_e = \left[
	\begin{array}{c|c}%
	M^{\ell}_e & M^{\ell,\ell-1}_e \\\hline\\[-1em]
	(M^{\ell,\ell-1}_e)^T & M^{\ell-1}_e
	\end{array}
	\right],\quad (M^{\ell,\ell-1}_e)_{ij} = \int_e \phi_i^\ell\phi_j^{\ell-1}\text{ dx},
	\end{align}
	where $\{\phi_i^\ell\}_{i=1}^{m_{e_\ell}}$ and $\{\phi_j^{\ell-1}\}_{j=1}^{m_{e_{\ell-1}}}$ are sets of the basis functions of $V^\ell$ and $V^{\ell-1}$ respectively that have non-zero support over $e$. $M_e$ is of size $m_e \times m_e$, $M^{\ell}_e$ is of size $m_{e_\ell}\times m_{e_\ell}$, $M^{\ell-1}_e$ is of size $m_{e_{\ell-1}}\times m_{e_{\ell-1}}$ and $M^{\ell,\ell-1}_e$ is of size $m_{e_\ell}\times m_{e_{\ell-1}}$. 
	\item
	Let $L^\ell$ and $L^{\ell-1}$ be the supermesh assembling
	matrices of the finite element spaces $V^\ell$ and
	$V^{\ell-1}$ respectively, mapping the local supermesh cell
	degrees of freedom to the global degrees of freedom of $V^\ell$. Assemble
	the local supermesh contributions together with
	\begin{align}
	\label{eq:coupling_condition3}
	M =  
	\left[
	\begin{array}{c|c}%
	(L^\ell)^T\diag_e(M^{\ell}_e)L^\ell & (L^\ell)^T\diag_e(M^{\ell,\ell-1}_e)L^{\ell-1} \\\hline\\[-1em]
	(L^{\ell-1})^T\diag_e(M^{\ell,\ell-1}_e)^TL^\ell & (L^{\ell-1})^T\diag_e(M^{\ell-1}_e)L^{\ell-1}
	\end{array}
	\right] .
	\end{align}
\end{enumerate}
Observe that \eqref{eq:coupling_condition3} and \eqref{eq:coupling_condition2} agree since
\begin{align}
\begin{array}{llr}
M^l & = (L^l)^T\diag_e(M^{l}_e)L^l,&\text{for }l\in\{\ell, \ell-1\},\\
M^{\ell,\ell-1} & = (L^\ell)^T\diag_e(M^{\ell,\ell-1}_e)L^{\ell-1}.& 
\end{array}
\end{align}
Note that the above is again just the assembly of the contributions of each supermesh element to the global mass matrices in matrix form. As we will see next, we actually do not need to assemble $M$, but only the local mass matrices $M^\ell_e$ and $M^{\ell,\ell-1}_e$ for each supermesh element $e$.

\subsubsection{From global to local: the local coupling condition}

We again use a divide-and-conquer strategy to split the global sampling problem into smaller local subproblems (cf.~remark \ref{obs:global_to_local}). Suppose that we can sample a local Gaussian vector $\bm{b}_e\sim\mathcal{N}(0,M_e)$ on each supermesh element $e$. We can then separate $\bm{b}_e$ into two Gaussian vectors $\bm{b}^\ell_e$ and $\bm{b}^{\ell-1}_e$ such that $\bm{b}_e = [(\bm{b}^\ell_e)^T, (\bm{b}^{\ell-1}_e)^T]^T$ and
\begin{align}
\label{eq:local_coupling_condition}
\bm{b}^\ell_e\sim\mathcal{N}(0,M^\ell_e),\quad\bm{b}^{\ell-1}_e\sim\mathcal{N}(0,M^{\ell-1}_e),\quad\E[\bm{b}^\ell_e(\bm{b}^{\ell-1}_e)^T] = M^{\ell,\ell-1}_e.
\end{align}
Since \eqref{eq:local_coupling_condition} is the local equivalent of \eqref{eq:coupling_condition1}, we refer to it as the \emph{local coupling condition}. Finally, we can use the same approach as in \eqref{eq:trick_global_to_local} and assemble the coupled vectors $\bm{b}^\ell$ and $\bm{b}^{\ell-1}$ as
\begin{align}
\bm{b}^\ell = \sum\limits_{e=1}^n(L_e^\ell)^T\bm{b}^\ell_e,\quad \bm{b}^{\ell-1} = \sum\limits_{e=1}^n(L_e^{\ell-1})^T\bm{b}^{\ell-1}_e, 
\end{align}
where $n$ is the number of supermesh elements. This enforces the correct distribution since sums of Gaussian random variables are Gaussian and the covariance structure is correct. In particular,
\begin{equation}
\begin{split}
\E[\bm{b}^l(\bm{b}^l)^T] &= \sum\limits_{i,j=1}^n(L_i^l)^T\E[\bm{b}^l_i(\bm{b}^l_j)^T]L_j^l = \sum\limits_{i=1}^n(L_i^l)^T\E[\bm{b}^l_i(\bm{b}^l_i)^T]L_i^l\\
&=(L^l)^T\diag_i(M^l_i)L^l = M^l,\quad\text{for }l\in\{\ell,\ell-1\},
\end{split}
\end{equation}
and,
\begin{equation}
\begin{split}
\E[\bm{b}^\ell(\bm{b}^{\ell-1})^T] &= \sum\limits_{i,j=1}^n(L_i^\ell)^T\E[\bm{b}^\ell_i(\bm{b}^{\ell-1}_j)^T]L_j^{\ell-1} = \sum\limits_{i=1}^n(L_i^\ell)^T\E[\bm{b}^\ell_i(\bm{b}^{\ell-1}_i)^T]L_i^{\ell-1}\\
&=(L^\ell)^T\diag_i(M^{\ell,\ell-1}_i)L^{\ell-1} = M^{\ell,\ell-1},
\end{split}
\end{equation}
where we have used that $\bm{b}^l_i$ and $\bm{b}^l_j$ are independent for $i\neq j$ for $l\in\{\ell,\ell-1\}$ and that $\bm{b}^\ell_i$ is independent from $\bm{b}^{\ell-1}_j$ if $i\neq j$. Thus, again the global sampling problem can be recast as a series of much smaller, independent, local sampling problems.

Finally, it remains to devise a strategy for sampling realizations of
the local vectors $\bm{b}_e$ on a given supermesh element $e$. The
following result demonstrates that the covariance matrix of $\bm{b}_e$
is singular and how such sampling can be simplified.
\begin{lemma}
	\label{th:coupling_theorem}
	Let $V^{\ell}$ and $V^{\ell-1}$ be finite element spaces over two tessellations $D_h^\ell$, $D_h^{\ell-1}$ of the same domain. Let $S$ be a supermesh of $D_h^\ell$ and $D_h^{\ell-1}$. Let $\phi_i^\ell$, $\phi_j^{\ell-1}$ for $i=1,\dots,m_{e_\ell}$, $j=1,\dots,m_{e_{\ell-1}}$ be the basis functions of $V^\ell$ and $V^{\ell-1}$ respectively that have non-zero support over $e$. Let $V^{\ell}|_e = \textnormal{span}(\phi_1^{\ell}|_e,\dots,\phi^{\ell}_{m_{e_\ell}}|_e)$ and $V^{\ell-1}|_e=\textnormal{span}(\phi_1^{\ell-1}|_e,\dots,\phi^{\ell-1}_{m_{e_{\ell-1}}}|_e)$ be the restrictions of $V^\ell$ and $V^{\ell-1}$ to $e$. Assume that $V^{\ell-1}|_e \subseteq V^\ell|_e$, i.e.~that the restrictions are nested and that $M_e^{\ell}$ as defined in \eqref{eq:local_supermesh_mass_matrix} is non-singular, then
	\begin{align}
	\label{eq:coupling_trick}
	\textrm{rank}(M_e) = \textrm{rank}(M^\ell_e) \quad\text{and}\quad M^{\ell-1}_e = (M^{\ell,\ell-1}_e)^T(M^\ell_e)^{-1}M^{\ell,\ell-1}_e.
	\end{align}
\end{lemma}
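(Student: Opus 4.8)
The plan is to use the hypothesis $V^{\ell-1}|_e\subseteq V^\ell|_e$ to express every restricted coarse basis function in terms of the restricted fine ones, which turns both assertions into elementary linear algebra. First I would gather the relevant restricted basis functions into row vectors $\Phi^\ell=(\phi_1^\ell|_e,\dots,\phi_{m_{e_\ell}}^\ell|_e)$ and $\Phi^{\ell-1}=(\phi_1^{\ell-1}|_e,\dots,\phi_{m_{e_{\ell-1}}}^{\ell-1}|_e)$, so that the blocks in \eqref{eq:local_supermesh_mass_matrix} become $M_e^\ell=\int_e(\Phi^\ell)^T\Phi^\ell\,dx$, $M_e^{\ell-1}=\int_e(\Phi^{\ell-1})^T\Phi^{\ell-1}\,dx$ and $M_e^{\ell,\ell-1}=\int_e(\Phi^\ell)^T\Phi^{\ell-1}\,dx$. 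Since $M_e^\ell$ is non-singular, its Gram structure forces the functions $\{\phi_i^\ell|_e\}$ to be linearly independent, so the inclusion of spaces provides a (unique) matrix $G\in\R^{m_{e_\ell}\times m_{e_{\ell-1}}}$ with $\Phi^{\ell-1}=\Phi^\ell G$.

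Substituting $\Phi^{\ell-1}=\Phi^\ell G$ into the integrals gives the two key block identities $M_e^{\ell,\ell-1}=M_e^\ell G$ and $M_e^{\ell-1}=G^T M_e^\ell G$. The second claim of \eqref{eq:coupling_trick} then drops out by direct substitution, using the symmetry of $M_e^\ell$:
\[
(M_e^{\ell,\ell-1})^T(M_e^\ell)^{-1}M_e^{\ell,\ell-1}=G^T M_e^\ell (M_e^\ell)^{-1}M_e^\ell G=G^T M_e^\ell G=M_e^{\ell-1}.
\]
Equivalently, this says that the Schur complement of $M_e^\ell$ in $M_e$ vanishes.

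For the rank claim I would write $M_e$ in factored form. Setting $P=\begin{bmatrix}I\\ G^T\end{bmatrix}\in\R^{m_e\times m_{e_\ell}}$, the block identities above give $M_e=P M_e^\ell P^T$, hence $\textrm{rank}(M_e)\le\textrm{rank}(M_e^\ell)$. Conversely $M_e^\ell$ is the leading principal submatrix of $M_e$, i.e.\ $M_e^\ell=[\,I\ \ 0\,]M_e[\,I\ \ 0\,]^T$, so $\textrm{rank}(M_e^\ell)\le\textrm{rank}(M_e)$; together with the non-singularity of $M_e^\ell$ this yields $\textrm{rank}(M_e)=\textrm{rank}(M_e^\ell)=m_{e_\ell}$.

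There is no real obstacle once $G$ is in hand; the only points needing care are that non-singularity of $M_e^\ell$ forces the restricted fine basis to be linearly independent (so that $G$ exists and is uniquely determined), and keeping the transposes consistent with the convention $(M_e^{\ell,\ell-1})_{ij}=\int_e\phi_i^\ell\phi_j^{\ell-1}\,dx$ when reading off $M_e^{\ell,\ell-1}=M_e^\ell G$. Conceptually, the lemma merely records that on a single supermesh element both spaces are ordinary polynomial spaces, so the element-level nesting is exactly what makes the coarse diagonal block a function of the fine block and the coupling block, collapsing the rank and producing the explicit formula.
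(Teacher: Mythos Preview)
Your proof is correct and essentially mirrors the paper's: both introduce the change-of-basis matrix (your $G$, the paper's $R_e$) from the nesting assumption, derive $M_e^{\ell,\ell-1}=M_e^\ell G$ and $M_e^{\ell-1}=G^T M_e^\ell G$, and substitute to obtain the Schur-complement identity. The only cosmetic difference is in the rank step, where the paper writes a block LDU factorization and invokes Sylvester's law of inertia, while you use the slightly more elementary pair of inequalities from $M_e=PM_e^\ell P^T$ and the principal-submatrix bound.
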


\begin{proof}
	Since $V^{\ell-1}|_e \subseteq V^\ell|_e$, then we have that, for all $j$, $\phi_j^{\ell-1} \in V^\ell|_e$, which in turn means that there exist a set of coefficients $r_{ji}\in\R$ such that $\phi_j^{\ell-1}=\sum_ir_{ji}\phi_i^\ell$. Now, let $R_e$ be a $m_{e_\ell}\times m_{e_{\ell-1}}$ matrix such that $(R_e)_{i,j} = r_{ji}$, and define the vector functions
	\begin{align}
	\bm{\phi}^{\ell-1} = \left[\begin{array}{c}
	\phi_1^{\ell-1}\\
	\vdots\\
	\phi_{m_{e_{\ell-1}}}^{\ell-1}
	\end{array}\right],\quad
	\bm{\phi}^{\ell} = \left[\begin{array}{c}
	\phi_1^{\ell}\\
	\vdots\\
	\phi_{m_{e_{\ell}}}^{\ell-1}
	\end{array}\right].
	\end{align}
	We then have that
	\begin{align}
	\bm{\phi}^{\ell-1} = R^T_e\bm{\phi}^\ell.
	\end{align}
	
	This implies that we can now rewrite $M^{\ell-1}_e$ as
	\begin{align}
	\label{eq:proof_1}
	M^{\ell-1}_e = \int_e \bm{\phi}^{\ell-1}(\bm{\phi}^{\ell-1})^T \text{d}x = R^T_e \int_e \bm{\phi}^{\ell}(\bm{\phi}^{\ell})^T \text{d}x\ R_e = R^T_e M^\ell_e R_e,
	\end{align}
	since $R_e$ is constant. Similarly, for $M^{\ell,\ell-1}_e$ we have,
	\begin{align}
	\label{eq:proof_2}
	M^{\ell,\ell-1}_e = \int_e \bm{\phi}^{\ell}(\bm{\phi}^{\ell-1})^T \text{d}x = \int_e \bm{\phi}^{\ell}(\bm{\phi}^{\ell})^T \text{d}x\ R_e = M^\ell_e R_e.
	\end{align}
	
	Combining \eqref{eq:proof_1} and \eqref{eq:proof_2} with the assumption that $M^\ell_e$ is invertible thus yields the second equation in \eqref{eq:coupling_trick}, since
	\begin{align}
	(M^{\ell,\ell-1}_e)^T(M^\ell_e)^{-1}M^{\ell,\ell-1}_e = R_e^TM^\ell_e(M^\ell_e)^{-1}M^{\ell}_eR_e = R^T_e M^\ell_e R_e = M^{\ell-1}_e.
	\end{align}
	Pulling \eqref{eq:local_supermesh_mass_matrix}, \eqref{eq:proof_1} and \eqref{eq:proof_2} together we can now express $M_e$ as,
	\begin{align}
	\label{eq:proof_3}
	M_e = \left[
	\begin{array}{c|c}%
	M^{\ell}_e & M^\ell_e R_e \\\hline\\[-1em]
	R_e^TM^\ell_e & R^T_e M^\ell_e R_e
	\end{array}
	\right] = \left[
	\begin{array}{c|c}%
	I & 0 \\\hline\\[-1em]
	R_e^T & I
	\end{array}
	\right]
	\left[
	\begin{array}{c|c}%
	M^\ell_e & 0 \\\hline\\[-1em]
	0 & 0
	\end{array}
	\right]
	\left[
	\begin{array}{c|c}%
	I & R_e \\\hline\\[-1em]
	0 & I
	\end{array}
	\right],
	\end{align}
	where we have used the fact that $M^\ell_e$ is symmetric. Since $M_e$ is symmetric and the two block triangular matrices on the right hand side of \eqref{eq:proof_3} are invertible, Sylvester's law of inertia \cite{Strang2016} gives that
	\begin{align}
	\label{eq:proof_4}
	\textnormal{rank}(M_e) = \textnormal{rank}\left(\left[
	\begin{array}{c|c}%
	M^\ell_e & 0 \\\hline\\[-1em]
	0 & 0
	\end{array}
	\right]\right) = \textnormal{rank}(M^\ell_e),
	\end{align}
	which concludes the proof.
\end{proof}
The assumptions of Lemma \ref{th:coupling_theorem} are mild and are satisfied by most finite element families e.g.~Lagrange elements (continuous piecewise polynomials defined relative to the tessellations).

Using Lemma \ref{th:coupling_theorem} we can now sample $\bm{b}_e=[(\bm{b}^\ell_e)^T, (\bm{b}^{\ell-1}_e)^T]^T$ by enforcing the local coupling condition \eqref{eq:local_coupling_condition} as follows. For each supermesh element $e$:
\begin{enumerate}[leftmargin=1cm]
	\item Compute $M^{\ell,\ell-1}_e$ and the Cholesky factorization $M^\ell_e = H_eH_e^T$. 
	\item Sample $\bm{z}_e\sim\mathcal{N}(0,I)$ of length $m_{e_\ell}$ and set $\bm{b}^\ell_e = H_e\bm{z}_e$.
	\item Compute $\bm{b}^{\ell-1}_e$ as $\bm{b}^{\ell-1}_e = (M^{\ell,\ell-1})^TH_e^{-T}\bm{z}_e$.
\end{enumerate}
Note that the $\bm{b}^\ell_e$ and $\bm{b}^{\ell-1}_e$ sampled this way satisfy the local coupling condition \eqref{eq:local_coupling_condition} since, by \eqref{eq:coupling_trick} and the fact that $H_e$ is the Cholesky factor of $M^\ell_e$, we have that
\begin{equation}
\E[\bm{b}^\ell_e(\bm{b}^\ell_e)^T] = H_e\E[\bm{z}_e\bm{z}_e^T]H_e^T = M^\ell_e,
\end{equation}
second,
\begin{equation}
\begin{split}
\E[\bm{b}^{\ell-1}_e(\bm{b}^{\ell-1}_e)^T] &= (M^{\ell,\ell-1}_e)^TH_e^{-T}\E[\bm{z}_e\bm{z}_e^T]H_e^{-1}M^{\ell,\ell-1}_e\\
&= (M^{\ell,\ell-1}_e)^T(H_eH_e^T)^{-1}M^{\ell,\ell-1}_e\\
&= (M^{\ell,\ell-1}_e)^T(M^\ell_e)^{-1}M^{\ell,\ell-1}_e = M^{\ell-1}_e,
\end{split}
\end{equation}
and third,
\begin{equation}
\E[\bm{b}^{\ell}_e(\bm{b}^{\ell-1}_e)^T] = H_e\E[\bm{z}_e\bm{z}_e^T]H_e^{-1}M^{\ell,\ell-1}_e = H_eH_e^{-1}M^{\ell,\ell-1}_e = M^{\ell,\ell-1}_e.
\end{equation}

In the case in which the transformation to the reference element is affine (such as with Lagrange elements on simplices) the sampling can be made more efficient by sampling white noise directly on the supermesh and then interpolating it onto the parent mashes. This strategy exploits the following result.
\begin{corollary}[of Lemma \ref{th:coupling_theorem}]
	\label{coroll:lemma}
	Let $V^{\ell}$ and $V^{\ell-1}$ be FEM approximation subspaces over two triangulations $D_h^\ell$, $D_h^{\ell-1}$ of the same domain. Let $S$ be a supermesh of $D_h^\ell$ and $D_h^{\ell-1}$ and let $V^S$ be a FEM approximation subspace over $S$. With the same notation as in Lemma \ref{th:coupling_theorem}, for each supermesh element $e$ let $V^S|_e$, $V^{\ell}|_e$ and $V^{\ell-1}_e$ be the restrictions of $V^S$, $V^\ell$ and $V^{\ell-1}$ to $e$. Let $M^S_e$ be the local mass matrix over $V^S|_e$. Assume that $V^{\ell}|_e, V^{\ell-1}|_e  \subseteq V^S|_e$, i.e.\ that the parent mesh restrictions are nested within the supermesh restriction. Then there exist local interpolation matrices $(R_e^\ell)^T$ and $(R_e^{\ell-1})^T$ such that
	\begin{align}
	\label{eq:supermesh_nested_relations}
	M_e^\ell = (R_e^\ell)^TM^S_eR_e^\ell,\quad M_e^{\ell-1} = (R_e^{\ell-1})^TM^S_eR_e^{\ell-1},\quad M_e^{\ell,\ell-1} = (R_e^\ell)^TM^S_eR_e^{\ell-1}.
	\end{align}
\end{corollary}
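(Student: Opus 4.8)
The plan is to run the same computation that appears in the proof of Lemma~\ref{th:coupling_theorem} (the derivation of \eqref{eq:proof_1} and \eqref{eq:proof_2}), but with the supermesh restriction $V^S|_e$ playing the role that $V^\ell|_e$ played there. The point is that that computation only used nestedness of one space inside another together with the fact that the associated change-of-basis matrix is constant on $e$; since by hypothesis both $V^\ell|_e$ and $V^{\ell-1}|_e$ are nested inside the common refinement space $V^S|_e$, the same manipulation applies to each of them simultaneously.

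First I would introduce the local basis function vector $\bm{\phi}^S=[\phi_1^S|_e,\dots,\phi_{m_{e_S}}^S|_e]^T$ of the $V^S$ basis functions with non-zero support on $e$. Since $V^\ell|_e\subseteq V^S|_e$, each $\phi_i^\ell|_e$ is a fixed linear combination of the $\phi_k^S|_e$; collecting these coefficients into a matrix $R_e^\ell\in\R^{m_{e_S}\times m_{e_\ell}}$ gives $\bm{\phi}^\ell=(R_e^\ell)^T\bm{\phi}^S$, and similarly $V^{\ell-1}|_e\subseteq V^S|_e$ yields $R_e^{\ell-1}\in\R^{m_{e_S}\times m_{e_{\ell-1}}}$ with $\bm{\phi}^{\ell-1}=(R_e^{\ell-1})^T\bm{\phi}^S$. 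In the affine, nodal (Lagrange-on-simplices) setting these are exactly the local interpolation/prolongation matrices of the parent spaces into $V^S|_e$: a function already lying in $V^S|_e$ equals its own $V^S$ nodal interpolant, so the matrix taking parent coefficients to the corresponding $V^S$ coefficients is precisely $R_e^\ell$, which is why I would call $(R_e^\ell)^T$, $(R_e^{\ell-1})^T$ interpolation matrices and why the construction realises the idea of sampling white noise on the supermesh and interpolating it onto the parent meshes.

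Next I would substitute into the definitions of the local mass matrices. Because $R_e^\ell$ and $R_e^{\ell-1}$ are constant on $e$ they pull out of the integrals, so
\[
M_e^\ell=\int_e\bm{\phi}^\ell(\bm{\phi}^\ell)^T\,\text{d}x=(R_e^\ell)^T\Big(\int_e\bm{\phi}^S(\bm{\phi}^S)^T\,\text{d}x\Big)R_e^\ell=(R_e^\ell)^TM_e^SR_e^\ell,
\]
and identically $M_e^{\ell-1}=(R_e^{\ell-1})^TM_e^SR_e^{\ell-1}$ and $M_e^{\ell,\ell-1}=\int_e\bm{\phi}^\ell(\bm{\phi}^{\ell-1})^T\,\text{d}x=(R_e^\ell)^TM_e^SR_e^{\ell-1}$. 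These are the three identities \eqref{eq:supermesh_nested_relations}, completing the proof.

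Since the algebra is a one-line repeat of Lemma~\ref{th:coupling_theorem}, the only thing that really needs attention is checking that the standing hypothesis $V^\ell|_e,V^{\ell-1}|_e\subseteq V^S|_e$ is achievable for the spaces one actually wants to use. Here I would invoke \cite[Lemma~2]{Farrell2009Supermesh}: every supermesh element $e$ lies inside a single element of each parent mesh, so a parent basis function restricts on $e$ to a polynomial of degree at most $\max(p_\ell,p_{\ell-1})$; taking $V^S$ to be the Lagrange space of that degree on $S$ (or any richer space) makes the nesting hold, and the fact that $R_e^\ell$ then depends only on the fixed geometry of $e$ — not on $x$ — is exactly what licenses pulling it out of the integral above. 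Beyond this bookkeeping I do not expect any genuine obstacle.
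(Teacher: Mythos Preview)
Your proposal is correct and follows essentially the same route as the paper: the paper's proof simply says to rerun the first part of the proof of Lemma~\ref{th:coupling_theorem} with $S$ in place of $\ell$ and $l\in\{\ell,\ell-1\}$ in place of $\ell-1$ to obtain $\bm{\phi}^l=(R_e^l)^T\bm{\phi}^S$, and then substitutes to get the three mass-matrix identities, exactly as you describe. Your additional paragraph on why the nesting hypothesis is achievable via \cite[Lemma~2]{Farrell2009Supermesh} is helpful context but is not part of the paper's proof.
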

\begin{proof}
	Let $l\in\{\ell,\ell-1\}$. The proof for the first two equations in \eqref{eq:supermesh_nested_relations} follows from the first part of the proof of Lemma \ref{th:coupling_theorem} by replacing $\ell-1$ with $l$ and $\ell$ with $S$. This argument gives us that
	\begin{align}
	\bm{\phi}^l = (R_e^l)^T\bm{\phi}^S,
	\end{align}
	from which we also obtain the last relation in \eqref{eq:supermesh_nested_relations} since
	\begin{align}
	M^{\ell,\ell-1}_e = \int_e \bm{\phi}^{\ell}(\bm{\phi}^{\ell-1})^T \text{d}x = (R_e^{\ell})^T\int_e \bm{\phi}^{S}(\bm{\phi}^{S})^T \text{d}x\ R_e^{\ell-1} = (R_e^{\ell})^TM^S_e R_e^{\ell-1}.
	\end{align}
\end{proof}
By using this result and the strategy highlighted in Remark \ref{remark:Lagrange}, we can sample $\bm{b}^l_e$ for $l\in\{\ell,\ell-1\}$ by computing
\begin{align}
\bm{b}^l_e = (R_e^l)^TH_{r}|e_r|^{-1/2}|e|^{1/2}\bm{z}_e,\quad\text{with}\quad \bm{z}_e\sim\mathcal{N}(0,I),
\end{align}
since Remark \ref{remark:Lagrange} yields the relation $M^S_e/|e| = H_rH^T_r/|e_r|=\text{ const}$, where $H_r$ is the Cholesky factor of the local mass matrix over the reference element $e_r$. Note that $H_r$ has to be computed only once. The advantage of performing this operation is that it avoids the assembly and factorization of each supermesh element local mass matrix.

\begin{remark}[Simpler cases: nested meshes and $p$-refinement]
	In the case in which the meshes of the MLMC hierarchy are nested, everything discussed is still valid by taking the supermesh to be the finer of the two meshes that define the MLMC level. In the case in which the MLMC hierarchy is constructed by using $p$-refinement there is only one mesh in the hierarchy and everything still applies by taking this mesh to be the `supermesh'. In both cases a supermesh construction is not required in practice.
\end{remark}
\begin{remark}
	The coupling approach presented can also be used to couple the same white noise sample over the whole hierarchy of meshes. This enables the use of geometric full-multigrid \cite{trottenberg2000multigrid} to solve the problem given by \eqref{eq:coupled1}--\eqref{eq:coupled2} on the finer grid with optimal multigrid complexity. 
\end{remark}

\section{Numerical results}
\label{sec:num_results}
In this section we investigate the performance of the techniques presented. We consider the following PDE:
\begin{align}
\label{eq:PDE_of_interest}
\begin{array}{rclcr}
-\nabla\cdot(e^{u(x,\omega)}\nabla q(x,\omega)) = 1, & & x\in G = (-0.5,0.5)^d, & & \omega\in\Omega,\\
q(x,\omega) = 0,& & x\in \partial G,& &\omega\in\Omega,
\end{array}
\end{align}
where $u$ is a Mat\'ern field as given by \eqref{eq:Matern} with mean and variance chosen so that $e^{u(x,\omega)}$ has mean $1$ and standard deviation $0.2$. We choose $D=(-1,1)^d$ as the outer computational domain on which to solve \eqref{eq:truncated_white_noise_eqn}. The output functional of interest we consider here is the $L^2(G)$ norm of $q$ squared, namely ${P(\omega)=\Vert q\Vert ^2_{L^2(G)}(\omega)}$.

We solve \eqref{eq:truncated_white_noise_eqn} and \eqref{eq:PDE_of_interest} with the FEniCS software package \cite{LoggMardalEtAl2012a} and we discretize the two problems by using continuous Lagrange finite elements of the same degree. For the linear solver, we use the BoomerAMG algebraic multigrid algorithm from Hypre \cite{hypre} as a preconditioner and the conjugate gradient routine of PETSc \cite{balay2014petsc} for all equations. As convergence criterion for the solver we require that the absolute size of the preconditioned residual norm is below a tolerance of $10^{-15}$. We use the libsupermesh software package \cite{libsupermesh-tech-report} for the supermesh constructions. 

When using $h$-refinement, we construct the MLMC mesh hierarchies $\{D_h^\ell\}_{\ell=1}^L$ and $\{G_h^\ell\}_{\ell=1}^L$ in such a way that $G_h^\ell$ is embedded within $D_h^\ell$ for all $\ell$, but $D_h^{\ell-1}$ and $G_h^{\ell-1}$ are not nested respectively within $D_h^{\ell}$ and $G_h^{\ell}$ for all $\ell>1$. As the meshes are non-nested, a supermesh construction is required to couple each MLMC level. The mesh hierarchies we use are composed of $L=9$ meshes in 2D and $L=5$ meshes in 3D. The coarsest mesh in each hierarchy is uniform, while the other meshes are non-uniform and unstructured. Since the convergence behavior of the FEM is dependent on the quality of the mesh used, we try to sanitize our numerical results from this effect by choosing meshes whose quality indicators do not vary excessively throughout the hierarchies. Basic properties of the different meshes and number of elements of the constructed supermeshes are summarized in Tables \ref{tab:table1} and \ref{tab:table2}. Note that the number of elements in the supermesh is in practice always bounded by a constant times the number of elements of the finer parent mesh. This constant is dimension-dependent, and larger in 3D than 2D (cf.~Table~\ref{tab:table2}).

\begin{table}[h!]
	\centering
	\label{tab:table1}
	\begin{tabular}{c|l|r|c|c}
		\toprule
		$\ell$ (2D) & \multicolumn{1}{c|}{$h_\ell$} & \multicolumn{1}{c|}{$n_\ell$} & $(RR_{\min}, RR_{\max})$ & $n_{S_\ell}/n_\ell$ \\ \midrule
		$1$                               & $0.707$                       & $32$                         & $(0.83,\ 0.83)$         & n/a \\ 
		$2$                               & $0.416$                       & $120$                        & $(0.61,\ 1)$            & $2.03$\\ 
		$3$                               & $0.194$                       & $500$                        & $(0.61,\ 1)$            & $2.32$\\ 
		$4$                               & $0.098$                       & $2106$                       & $(0.55,\ 1)$            & $2.45$\\ 
		$5$                               & $0.049$                       & $8468$                       & $(0.45,\ 1)$            & $2.44$\\ 
		$6$                               & $0.024$                       & $33686$                      & $(0.46,\ 1)$            & $2.46$\\ 
		$7$                               & $0.012$                       & $134170$                     & $(0.41,\ 1)$            & $2.46$\\ 
		$8$                               & $0.006$                       & $535350$                     & $(0.42,\ 1)$            & $2.46$\\ 
		$9$                               & $0.003$                       & $2143162$                    & $(0.42,\ 1)$            & $2.47$\\ \bottomrule
	\end{tabular}
	\caption{Properties of the 2D mesh hierarchy: mesh level $l$, maximal element size $h_\ell$, number of elements $n_\ell$, minimal and maximal element radius ratios $RR_{\min}$ and $RR_{\max}$, and the number of elements of the supermesh constructed using the meshes on levels $\ell$ and $\ell-1$ as parent meshes $n_{S_\ell}$. $RR$ is computed as $d\times r^e_{in}/r^e_{circ}$, where $r^e_{in}$ and $r^e_{circ}$ are the in-radius and the circumradius of element $e$ respectively. Note that the element size roughly decreases proportional to $2^{-\ell}$.}
\end{table}
\begin{table}[h!]
	\centering
	\begin{tabular}{c|l|r|c|c|c}
		\toprule
		$\ell$ (3D) & \multicolumn{1}{c|}{$h_\ell$} & \multicolumn{1}{c|}{$n_\ell$} & $(RR_{\min}, RR_{\max})$ & $(DA_{\min}, DA_{\max})$ & $n_{S_\ell}/n_\ell$ \\ \midrule
		$1$                               & $0.866$                       & $384$                        & $(0.72,\ 0.72)$          & $(0.79,\ 1.57)$         & n/a\\ 
		$2$                               & $0.437$                       & $7141$                       & $(0.22,\ 1)$             & $(0.21,\ 2.82)$         & $17$\\ 
		$3$                               & $0.280$                       & $22616$                      & $(0.18,\ 1)$             & $(0.21,\ 2.83)$         & $66$\\ 
		$4$                               & $0.138$                       & $190081$                     & $(0.13,\ 1)$             & $(0.21,\ 2.85)$         & $42$\\ 
		$5$                               & $0.070$                       & $1519884$                    & $(0.12,\,1)$             & $(0.21,\ 2.85)$         & $46$\\ \bottomrule
	\end{tabular}
	\caption{Properties of the 3D mesh hierarchy: mesh level $l$, maximal element size $h_\ell$, number of elements $n_\ell$, minimal and maximal element radius ratios $RR_{\min}$ and $RR_{\max}$, the minimum and maximum element dihedral angles $DA_{\min}$ and $DA_{\max}$ respectively, and the number of elements of the supermesh constructed using the meshes on levels $\ell$ and $\ell-1$ as parent meshes. Note that the element size of the last three levels roughly decreases proportional to $2^{-\ell}$.}
	\label{tab:table2}
\end{table}

When using $p$-refinement, we define the MLMC levels by taking the coarsest mesh in the 2D hierarchy and by increasing the polynomial degree of the FEM subspaces linearly so that $p_\ell = \ell$ for $\ell=1,\dots,L$, with $L=9$. We do not consider $p$-refinement in the 3D case as it does not offer any additional complications other than an increased computational cost.

\subsection{Mat\'ern field convergence}
\label{sec:matern_field_conv}
We first address the convergence of the solution of \eqref{eq:truncated_white_noise_eqn} to the Mat\'ern field of interest. In practice, the exact solution of \eqref{eq:truncated_white_noise_eqn} is not known, so we consider the coupled equations \eqref{eq:coupled1} and \eqref{eq:coupled2} instead. 
We monitor the quantities
\begin{align}
\label{eq:error_measures}
\left|\E\left[\Vert u\Vert _{L^2(G)}^2 - \Vert u_{\ell-1}\Vert _{L^2(G)}^2\right]\right|,\quad \V\left[\Vert u_\ell\Vert _{L^2(G)}^2 - \Vert u_{\ell-1}\Vert _{L^2(G)}^2\right].
\end{align}  
Note that the value of $\E[\Vert u\Vert _{L^2(G)}^2]$ is known up to the error introduced by truncating $\R^d$ to $D$ since we can exchange the order of expectation and integration:
\begin{align}
\E\left[\Vert u\Vert _{L^2(G)}^2\right] = \E\left[\int_G u^2 \text{ d}x\right] = \int_G \E[u^2]\text{ d}x \approx \sigma^2 |G|,
\end{align}
where we have used the fact that $\E[u^2] \approx \sigma^2$ for all $x\in G$ (the relation only holds approximately due to domain truncation error).

The following result by Bolin et al.~\cite{Bolin2017} establishes a theoretical estimate for the expected convergence rates.
\begin{theorem}[Theorem 2.10 and Corollary 2.4 in \cite{Bolin2017}]
	\label{th:Bolin_convergence}
	Let $u$ be the solution of \eqref{eq:truncated_white_noise_eqn} ($k=1$ case) and let $u_h$ be its FEM approximation obtained by using continuous Lagrange elements over a mesh of maximum element size $h$. Then there exist constants $c_1$ and $c_2$ such that
	\begin{align}
	\E\left[\Vert u - u_h\Vert _{L^2(D)}^2\right]^{1/2} \leq c_1h^{2 - d/2},\\
	\left|\E\left[\Vert u\Vert _{L^2(D)}^2 - \Vert u_h\Vert _{L^2(D)}^2\right]\right| \leq c_2 h^{4 - d}.
	\end{align}
\end{theorem}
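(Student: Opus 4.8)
The result is quoted from \cite{Bolin2017}; the plan is to reconstruct that argument, which is spectral in nature. Let $\mathcal{A} = \mathcal{I} - \kappa^{-2}\Delta$ act on $D$ with homogeneous Dirichlet conditions, and let $(\mu_k, e_k)_{k\geq 1}$ be its eigenpairs with $\{e_k\}$ orthonormal in $L^2(D)$; Weyl's law gives $\mu_k \sim c\,k^{2/d}$, so $\sum_k \mu_k^{-2} < \infty$ exactly because $d \leq 3 < 4$. Writing $\W = \sum_k \xi_k e_k$ with $\xi_k$ i.i.d.\ $\mathcal{N}(0,1)$ (Definition~\ref{def:white_noise}) gives $u = \sum_k \mu_k^{-1}\xi_k e_k$ in $L^2(\Omega; L^2(D))$, and since $a(u, v) := (u,v) + \kappa^{-2}(\nabla u,\nabla v)$ satisfies $a(u, v_h) = \langle \W, v_h\rangle = a(u_h, v_h)$, the finite element solution is the Ritz ($a$-orthogonal) projection, $u_h = R_h u = \sum_k \mu_k^{-1}\xi_k R_h e_k$. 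By independence of the $\xi_k$, both error quantities reduce to deterministic sums over the eigenfunction Ritz errors, and the task becomes to estimate those sums.

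For the strong error,
\begin{align}
\E\left[\Vert u - u_h\Vert_{L^2(D)}^2\right] = \sum_{k\geq 1}\mu_k^{-2}\,\Vert e_k - R_h e_k\Vert_{L^2(D)}^2 .
\end{align}
I would bound $\Vert e_k - R_h e_k\Vert_{L^2(D)}$ two ways: by $L^2$-stability of $R_h$ it is $\lesssim 1$, and by the Aubin--Nitsche duality argument together with $H^2$ elliptic regularity on $D$ it is $\lesssim h^2\Vert e_k\Vert_{H^2(D)} \lesssim h^2\Vert \mathcal{A}e_k\Vert_{L^2(D)} = h^2\mu_k$. Hence $\Vert e_k - R_h e_k\Vert_{L^2(D)}^2 \lesssim \min(1, h^4\mu_k^2)$. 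Splitting the series at the threshold $\mu_k \approx h^{-2}$, i.e.\ $k \approx h^{-d}$, the low modes contribute $\lesssim h^4\,\#\{k : \mu_k \lesssim h^{-2}\} \sim h^{4-d}$ and the high modes contribute $\sum_{\mu_k \gtrsim h^{-2}} \mu_k^{-2} \sim h^{4-d}$ (using $\mu_k \sim c\,k^{2/d}$ and $d<4$), which gives $\E[\Vert u - u_h\Vert_{L^2(D)}^2]^{1/2} \lesssim h^{2-d/2}$.

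For the functional error, start from $\Vert u\Vert_{L^2(D)}^2 - \Vert u_h\Vert_{L^2(D)}^2 = 2(R_h u, u - R_h u)_{L^2(D)} + \Vert u - R_h u\Vert_{L^2(D)}^2$ and take expectations; the last term is already controlled by $h^{4-d}$. For the cross term, independence reduces $\E[(R_h u, u - R_h u)_{L^2(D)}]$ to $\sum_k \mu_k^{-2}(R_h e_k, e_k - R_h e_k)_{L^2(D)}$; introducing the dual problem $\mathcal{A}w = e_k$ (so $w = \mu_k^{-1}e_k$), testing with $e_k - R_h e_k$, and using Galerkin orthogonality rewrites $(R_h e_k, e_k - R_h e_k)_{L^2(D)}$ as $\mu_k^{-1}\Vert e_k - R_h e_k\Vert_{\mathcal{A}}^2 - \Vert e_k - R_h e_k\Vert_{L^2(D)}^2$, leaving $\sum_k \mu_k^{-3}\Vert e_k - R_h e_k\Vert_{\mathcal{A}}^2$ to estimate. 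Bounding the energy-norm Ritz error by $\min(\mu_k^{1/2}, h\mu_k)$ (C\'ea plus interpolation and $H^2$-regularity, together with $\Vert e_k\Vert_{\mathcal{A}}^2 = \mu_k$) and summing with the same split at $\mu_k \approx h^{-2}$ yields the claimed $h^{4-d}$ bound on $|\E[\Vert u\Vert_{L^2(D)}^2 - \Vert u_h\Vert_{L^2(D)}^2]|$.

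The main obstacle is extracting the \emph{sharp} exponents: each bound pairs a modewise, regularity-limited estimate with the precise Weyl asymptotics $\mu_k \sim c\,k^{2/d}$ and must balance the two regimes at $\mu_k \approx h^{-2}$, and the borderline (even-dimensional) case for the functional error is delicate and can carry a logarithmic factor unless the eigenvalue/eigenfunction estimates are sharpened. A second source of care is that, for $d \geq 2$, $\W$ is only a distribution (it lies in $H^{-d/2-\varepsilon}(D)$), so the eigenfunction expansions of $\W$ and $u$, the identity $u_h = R_h u$, and all term-by-term manipulations must be justified as limits in $L^2(\Omega; L^2(D))$; one also needs $D$ convex (or smooth) so that the $H^2$ elliptic regularity underpinning the Aubin--Nitsche and eigenfunction estimates is available.
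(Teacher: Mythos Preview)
The paper does not prove this theorem at all: Theorem~\ref{th:Bolin_convergence} is stated with an explicit attribution to Theorem~2.10 and Corollary~2.4 of \cite{Bolin2017} and is used as a black box to interpret the numerical experiments in Section~\ref{sec:matern_field_conv}. There is thus no ``paper's own proof'' to compare your reconstruction against.

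That said, your spectral sketch is essentially the right framework and matches the approach in the cited reference. The strong-error argument is sound: the modewise bound $\Vert e_k - R_h e_k\Vert_{L^2}^2 \lesssim \min(1, h^4\mu_k^2)$ combined with the Weyl split at $\mu_k \sim h^{-2}$ cleanly delivers $h^{4-d}$. For the functional error your decomposition is correct, but be aware that the resulting sum $\sum_k \mu_k^{-3}\Vert e_k - R_h e_k\Vert_{\mathcal A}^2$, bounded only by $\min(\mu_k, h^2\mu_k^2)$, gives $h^2\sum_{\mu_k\lesssim h^{-2}}\mu_k^{-1}$ on the low modes; this is $O(h^2)$ in $d=1$ (not the claimed $h^3$) and $O(h^2\log(1/h))$ in $d=2$. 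You flag the even-dimensional logarithm but not the $d=1$ shortfall; closing that gap requires either exploiting cancellation between the two sums in your identity or a sharper per-mode estimate than Cea plus $H^2$-interpolation provides. Your caveat about $u\notin H^1(D)$ for $d\geq 2$ (so that ``$u_h = R_h u$'' is only meaningful termwise in the eigenexpansion, as an $L^2(\Omega;L^2)$ limit) is exactly the right point to insist on.
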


Note that the norms appearing in the error estimates of Theorem \ref{th:Bolin_convergence} refer to the outer domain $D$, while the norms we consider in \eqref{eq:error_measures} are taken over the inner domain $G$. As $G\subset D$, we expect to observe the same convergence behavior.
We are not aware of any error estimates in the literature for the variance in \eqref{eq:error_measures}, but the convergence order observed in practice is usually twice that of the expectation (see for example \cite{Cliffe2011}), provided that the polynomial degree of the FEM basis is sufficiently high.

We consider the convergence behavior of the FEM approximation of the solution of \eqref{eq:white_noise_eqn} in the $h$-refinement case with the sampling strategy described in Section \ref{sec:white_noise_sampling}. We fix $\lambda = 0.2$ and we consider Mat\'ern fields of smoothness $\nu = 1$, $\nu = 3$ ($k=1$ and $k=2$ respectively in 2D) and $\nu=1/2$ ($k=1$ in 3D). For the $\nu=1$ and $\nu=1/2$ cases we use continuous piecewise linear (P1) elements, while for the $\nu=3$ case we use continuous piecewise quadratic (P2) elements.

Since each sample drawn by solving \eqref{eq:white_noise_eqn} is computationally expensive we are unable to take large numbers of samples as is generally done for 1D stochastic differential equations \cite{giles2015multilevel}. We therefore take $N_\ell = 5000$ Monte Carlo samples on all levels in 2D and $N_\ell = 1000$ samples in 3D. To verify that these numbers of samples are sufficient for accurate representation, we compute approximate $99.73\%$ confidence intervals (CIs) for all the quantities of interest as $3\bar{\sigma}_\ell/\sqrt{N_\ell}$, where $\bar{\sigma}_\ell$ is the sample standard deviation of the output functional of interest on level $\ell$. In all cases considered here but one, the FEM error dominates and the confidence intervals are negligibly small (so small that they would not be visible on the convergence plots). The relatively small number of samples only becomes a problem in the $\nu=3$ case where the FEM convergence is much faster and the Monte Carlo error dominates. In this case we replace the $\Vert u\Vert _{L^2(G)}$ term in the expectation in \eqref{eq:error_measures} with $\Vert u_\ell\Vert _{L^2(G)}$, and we instead monitor the convergence of the following quantity,
\begin{align}
\left|\E\left[\Vert u_{\ell}\Vert _{L^2(G)}^2 - \Vert u_{\ell-1}\Vert _{L^2(G)}^2\right]\right|.
\end{align}
The advantage of doing this is that the variance of this error measure decreases with the level (see Figure \ref{fig:matern2D}) and $5000$ samples are enough to obtain good accuracy.

Results are shown in Figures \ref{fig:matern2D} (2D) and \ref{fig:matern3D} (3D). For both the 2D and 3D experiments, we observe the theoretically predicted convergence rates in terms of the mesh size (after a pre-asymptotic regime). However, we note how convergence is less regular than expected (especially in the 3D case) because of the unstructured meshes employed. This behavior does not appear when uniform meshes are used (not shown). Apart from the $\nu=3$ case, the convergence order of the variance seems to be twice the convergence order of the expectation. In the $\nu=3$ case we observe order $6$ for the variance with P2 elements (Figure \ref{fig:matern2D}) and order $8$ with P3 elements (not shown). We conjecture that the variance convergence order is bounded by $2(p+1)$, where $p$ is the polynomial degree of the FEM basis functions.

\begin{figure}[h!] 
	\centering
	\includegraphics[trim=3cm 0cm 4cm 1cm, clip=true, width=.8\textwidth]{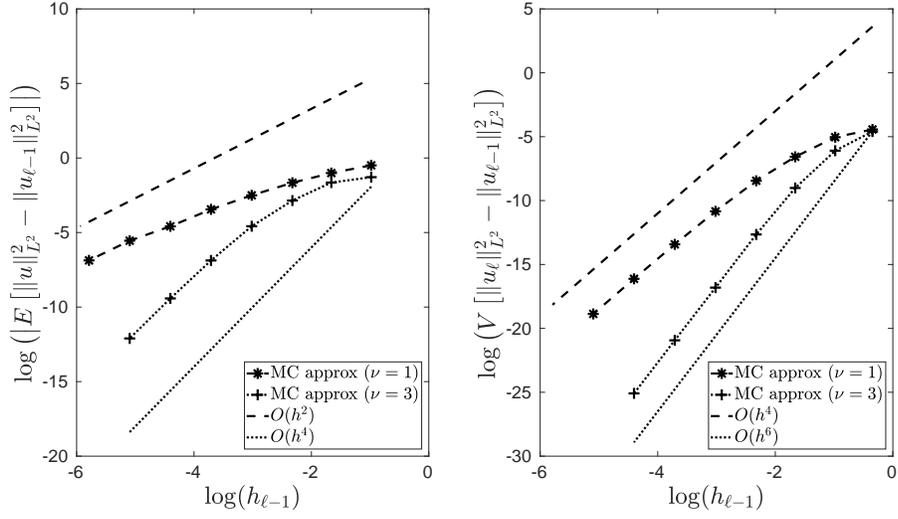}
	\centering
	\caption{Convergence behavior of the FEM approximation to \eqref{eq:white_noise_eqn} with $h$-refinement in 2D. Plots show (the natural logarithm of) the expected value $\E$ (left) and variance $\V$ (right) versus maximal mesh size $h_l$. For each level $l$, the fields $u_\ell$ and $u_{\ell-1}$ have been sampled by coupling white noise realizations as described in Section \ref{sec:white_noise_sampling}. As mentioned in the text, to compute the expected value in the $\nu=3$ case we have replaced $\Vert u\Vert_{L^2(G)}$ with $\Vert u_\ell\Vert_{L^2(G)}$.}
	\label{fig:matern2D}
\end{figure}
\begin{figure}[h!]
	\centering
	\includegraphics[trim=3cm 0cm 4cm 1cm, clip=true, width=.8\textwidth]{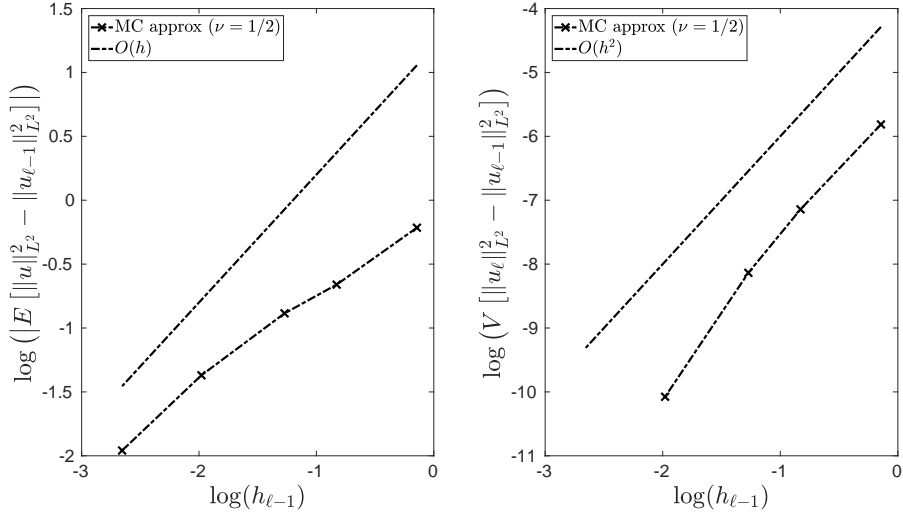}
	\centering
	\caption{Convergence behavior of the FEM approximation to \eqref{eq:white_noise_eqn} with $h$-refinement in 3D. Plots show (the natural logarithm of) the expected value $\E$ (left) and variance $\V$ (right) versus maximal mesh size $h_l$. The fields $u_\ell$ and $u_{\ell-1}$ have been sampled by coupling white noise realizations as described in Section \ref{sec:white_noise_sampling}.}
	\label{fig:matern3D}
\end{figure}

\begin{figure}[h!]
	\centering
	\includegraphics[trim=3cm 0cm 4cm 0cm, clip=true, width=.8\textwidth]{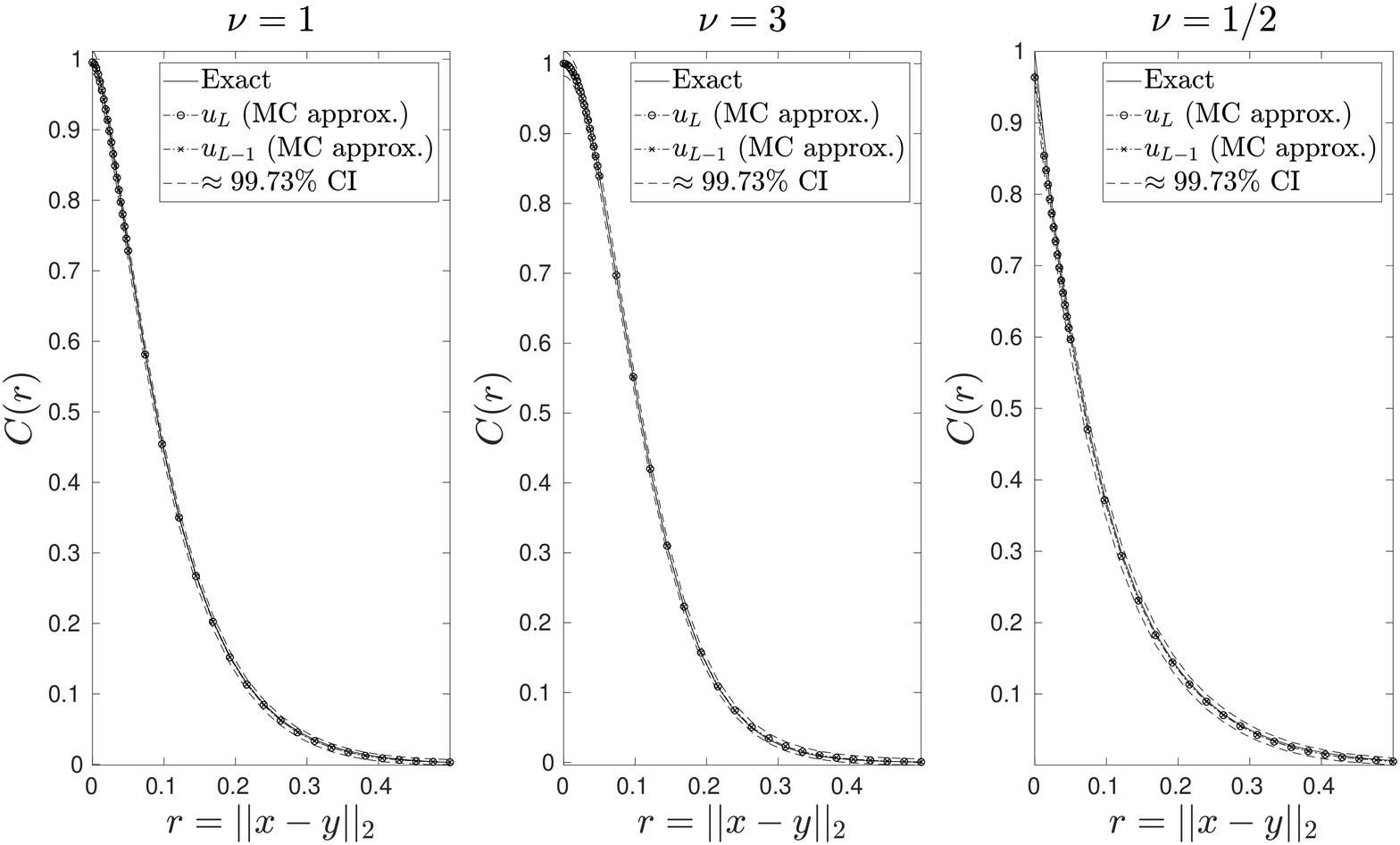}
	\centering
	\caption{Plot of exact covariances and sample covariances vs distance $r$ of the FEM solutions of \eqref{eq:coupled1} and \eqref{eq:coupled2} for three different values of $\nu$ in the $h$-refinement case. The exact covariance $C(r)$ is given by \eqref{eq:Matern}. For the $\nu=3$ case, an extra elliptic PDE solve is needed, see \eqref{eq:iterative_white_noise_eqn}.}
	\label{fig:covariance_convergence}
\end{figure}

In Figure \ref{fig:covariance_convergence}, we demonstrate how the Mat\'ern covariance and the field coupling are correctly enforced by our technique. We compare the covariances of the coupled Mat\'ern fields obtained by solving \eqref{eq:coupled1} and \eqref{eq:coupled2} on the finest level of the MLMC hierarchy with the exact Mat\'ern covariance given by \eqref{eq:Matern}. The estimated covariances match each other and the exact expression closely, demonstrating that our coupling technique is accurate also in practice.

As a final verification step, we check that the coupled fields are consistent with the telescoping sum in \eqref{eq:telescoping_sum} and \eqref{eq:MLMC_estimator}, i.e.\ if we let $a$, $b$, $c$ be the MC approximations of $\E[\Vert u_\ell\Vert ^2_{L^2(G)} - \Vert u_{\ell-1}\Vert ^2_{L^2(G)}]$, $\E[\Vert u_\ell\Vert ^2_{L^2(G)}]$ and $\E[\Vert u_{\ell-1}\Vert ^2_{L^2(G)}]$ respectively, we aim to verify that
\begin{align}
a - b + c \approx 0,
\end{align}
at least to within the Monte Carlo accuracy. In Figure \ref{fig:telescoping_sum}, we plot the quantity
\begin{align}
\label{eq:def:T}
T(a, b, c) \equiv \frac{|a - b + c|}{3(\sqrt{\V_a} + \sqrt{\V_b} + \sqrt{\V_c})},
\end{align}
for different levels and Mat\'ern smoothness parameters $\nu$, where $\V_a$, $\V_b$ and $\V_c$ are the Monte Carlo approximations of the variances of $a$, $b$ and $c$. The probability of this ratio $T$ being greater than $1$ is less than $0.3\%$ (for further details, see \cite{giles2015multilevel}). We observe that $T$ ranges between $0$ and $0.4$ for the levels and smoothness parameters tested (Figure~\ref{fig:telescoping_sum}), and in particular is well below $1$. This indicates that our implementation of the MLMC algorithm correctly satisfies the telescoping summation formulation.

\begin{figure}[h!]
	\centering
	\includegraphics[trim=3cm 0cm 4cm 1cm, clip=true, width=.8\textwidth]{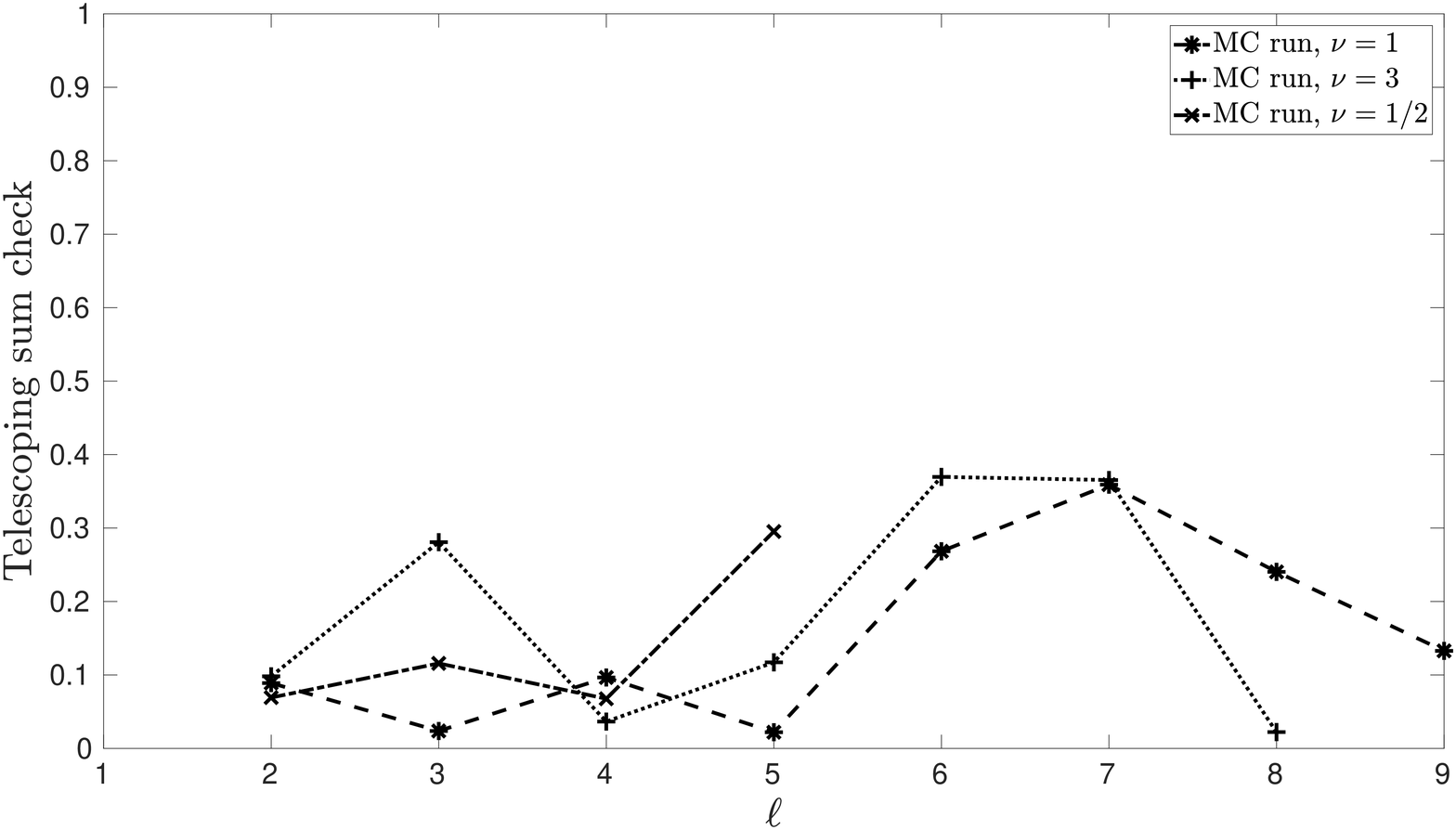}
	\centering
	\caption{Telescoping sum consistency check. Plot of $T(a, b, c)$ as defined by~\eqref{eq:def:T} versus level $l$ for ${a = \E[\Vert u_\ell\Vert ^2_{L^2(G)} - \Vert u_{\ell-1}\Vert ^2_{L^2(G)}]}$, $b = \E[\Vert u_\ell\Vert ^2_{L^2(G)}]$ and $c = \E[\Vert u_{\ell-1}\Vert ^2_{L^2(G)}]$ for different smoothness parameters $\nu$.}
	\label{fig:telescoping_sum}
\end{figure}

\subsection{MLMC convergence}
We now consider the convergence of the multilevel Monte Carlo method applied to \eqref{eq:PDE_of_interest}. In the case where $u$ is sampled exactly, the assumptions of the MLMC convergence theorem (Theorem \ref{th:MLMC_convergence}) hold for the $h$-refinement case with constants $\alpha = 2$ and $\beta = 4$ \cite{CharrierMLMC2013}. Furthermore, since we use multigrid to solve \eqref{eq:PDE_of_interest} and \eqref{eq:truncated_white_noise_eqn} we have $\gamma = d$. In the case where $\nu > 1$, the Mat\'ern field smoothness increases \cite{PetterAbrahamsen1997} and we expect higher convergence rates for the solution of \eqref{eq:PDE_of_interest}. For integer $\nu$ and exact sampling of $u$, if the domain $G$ is of class $C^{\nu+1}$, then the MLMC parameter values are given by $\alpha = \min(\nu+1, p+1)$ and $\beta=2\alpha$, where $p$ is the polynomial degree of the Lagrange elements used \cite{LiLiu2016}.

In our case, $u$ is approximated with the FEM and this could affect convergence. To verify that this is not what happens in practice, we first solve \eqref{eq:white_noise_eqn} with FEM for the same parameter values as in Subsection \ref{sec:matern_field_conv}, namely $\lambda=0.2$, $\nu=1$ and $\nu=3$ ($k=1$ and $k=2$ respectively in 2D) and $\nu=1/2$ ($k=1$ in 3D) using P1 elements for $\nu=1/2$ and $\nu=1$ and P2 elements for $\nu=3$. We then use the approximated Mat\'ern fields computed this way as coefficients in \eqref{eq:PDE_of_interest}, which we solve again using the same choice of finite elements.

Results are shown in Figures \ref{fig:darcy2D} and \ref{fig:darcy3D}. We observe that the convergence is unaffected by the approximation of the Mat\'ern fields and that the estimated convergence orders agree with the theory \cite{CharrierMLMC2013} apart from some discrepancies in the 3D case. This irregular behavior is probably due to the non-uniformity of the hierarchy (as we see from Table \ref{tab:table2}, the quality of the 3D meshes decreases with the level). This issue does not arise if the same numerical experiment is performed using a uniform hierarchy instead (hierarchy mesh sizes given by $h_\ell = 1.732\times 2^{-\ell}$, see Figure \ref{fig:darcy3Duniform}).

\begin{figure}[h!]
	\centering
	\includegraphics[trim=3cm 0cm 4cm 1cm, clip=true, width=.8\textwidth]{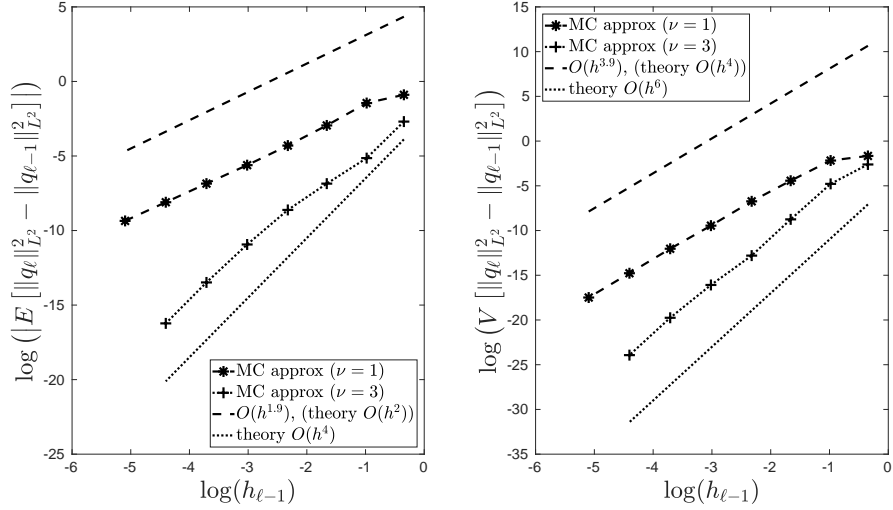}
	\centering
	\caption{Convergence behavior of the FEM approximation to the solution of \eqref{eq:PDE_of_interest} with $h$-refinement in 2D. The estimated convergence orders agree with the theory \cite{CharrierMLMC2013,LiLiu2016}.}
	\label{fig:darcy2D}
\end{figure}
\begin{figure}[h!]
	\centering
	\includegraphics[trim=3cm 0cm 4cm 1cm, clip=true, width=.8\textwidth]{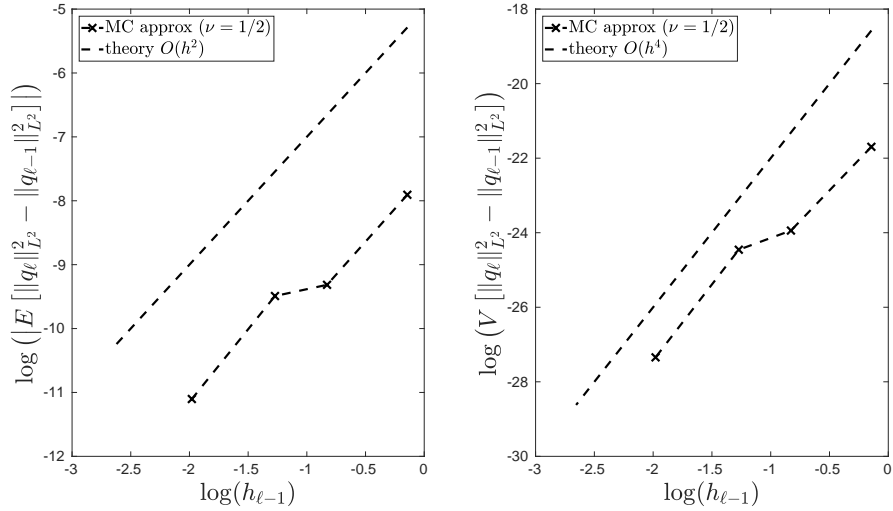}
	\centering
	\caption{Convergence behavior of the FEM approximation to the solution of \eqref{eq:PDE_of_interest} with $h$-refinement in 3D.}
	\label{fig:darcy3D}
\end{figure}
\begin{figure}[h!]
	\centering
	\includegraphics[trim=3cm 0cm 4cm 1cm, clip=true, width=.8\textwidth]{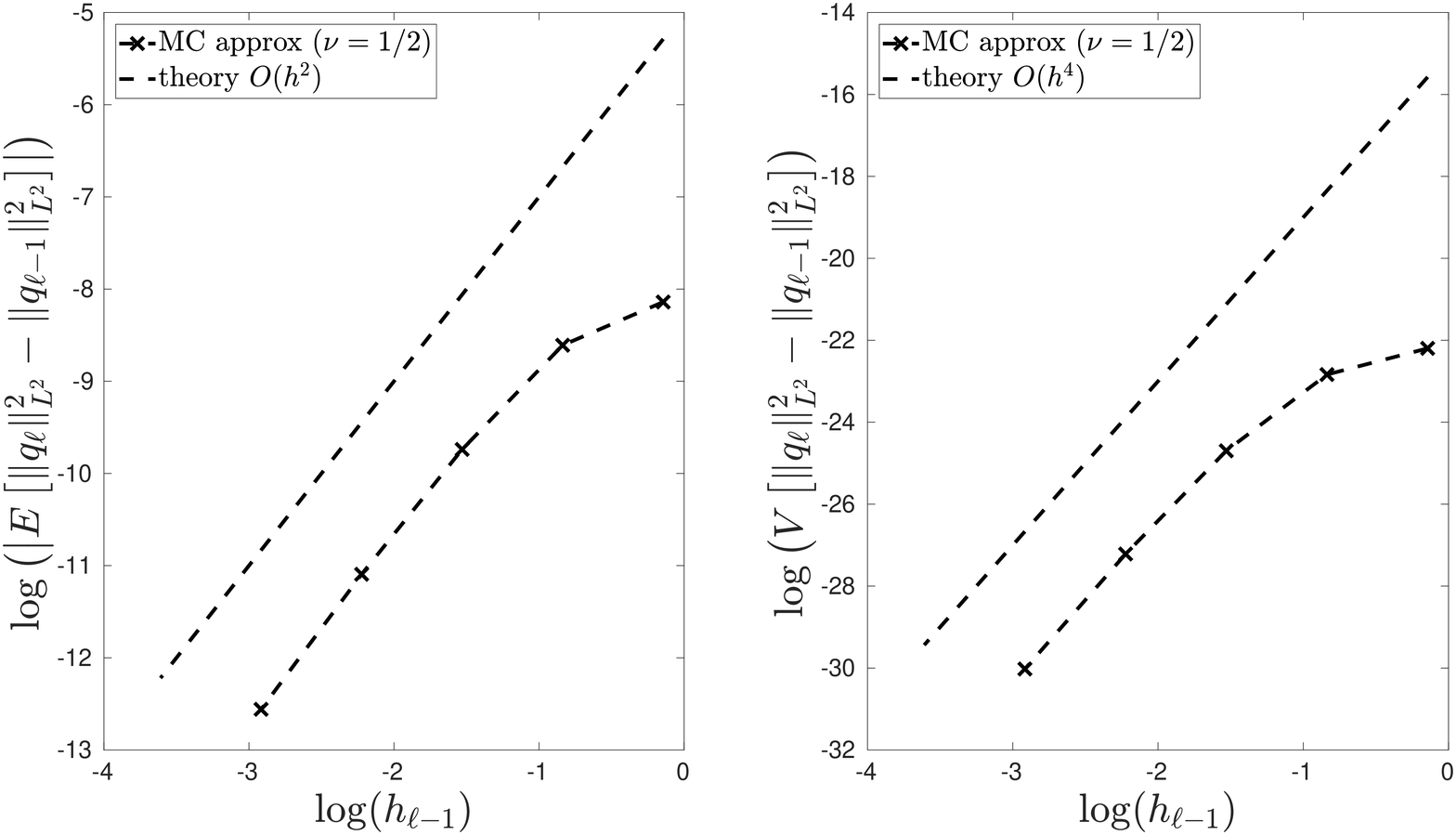}
	\centering
	\caption{Convergence behavior of the FEM approximation to the solution of \eqref{eq:PDE_of_interest} with $h$-refinement in 3D using a hierarchy of uniform meshes. The estimated convergence orders agree with the theory \cite{CharrierMLMC2013}. The mesh sizes are given by $h_\ell = 1.732\times 2^{-\ell}$.}
	\label{fig:darcy3Duniform}
\end{figure}

We now investigate how MLMC performs in practice. We use standard Monte Carlo and MLMC to estimate $\E[\Vert q\Vert_{L^2(G)}]$ at the same accuracy for $\nu=1$ (2D), P1 elements and for different error tolerances $\varepsilon$ (cf. \eqref{eq:MSE}). Again, the coefficient $u$ of \eqref{eq:PDE_of_interest} is also approximated with the FEM. We keep track of the total computational cost $C_{\text{tot}}$ and, in the MLMC case, of the number of samples $N_\ell$ taken on each level.

Results are shown in Figure \ref{fig:mlmc_convergence}. We observe that the number of levels used increases as the
tolerance $\varepsilon$ decreases (Figure \ref{fig:mlmc_convergence},
left). This behaviour reflects the targeted weak error accuracy
\cite{giles2015multilevel}: the number of samples is chosen by the
MLMC algorithm so as to optimize the total computational effort
\cite{giles2008} and it decreases with the level, with many samples on
the coarse levels and only a few on the fine levels.  As $\beta >
\gamma$ (cf.~Theorem \ref{th:MLMC_convergence}), we expect the total
cost of the MLMC algorithm $C_{\text{tot}}$ to be proportional to
$\varepsilon^{-2}$. Figure \ref{fig:mlmc_convergence} (right) shows
$\varepsilon^2C_{\text{tot}}$ versus $\varepsilon$, and indeed we
observe a near constant $\varepsilon^2C_{\text{tot}}$ for the MLMC
algorithm across multiple choices of $\varepsilon$. Figure
\ref{fig:mlmc_convergence} also compares the MLMC cost with the cost
of obtaining an estimate of the same accuracy with standard Monte
Carlo. We observe that the MLMC algorithm offers significant
computational savings compared to standard Monte Carlo, with an
improvement in the total cost $C_{\text{tot}}$ of up to $3$ orders of
magnitude (Figure \ref{fig:mlmc_convergence}, right).
\begin{figure}[h!]
	\centering
	\includegraphics[trim=3cm 0cm 4cm 1cm, clip=true, width=.8\textwidth]{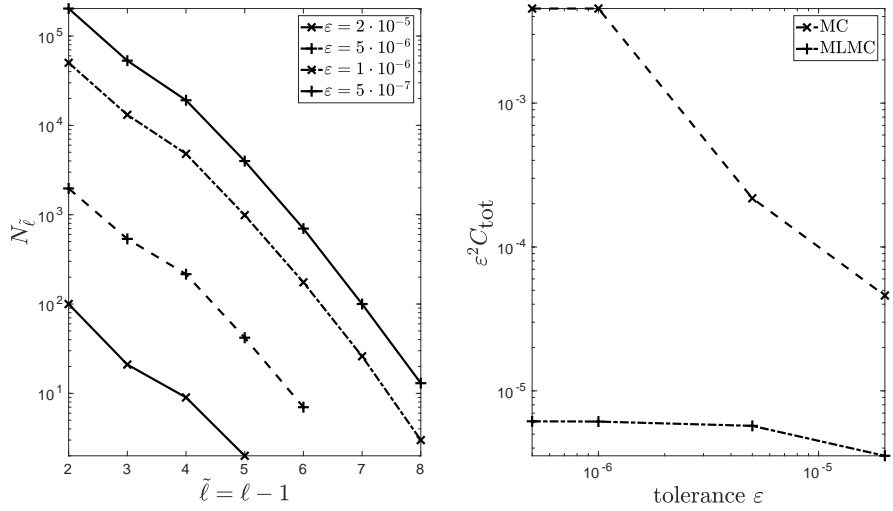}
	\centering
	\caption{MLMC convergence for the solution of \eqref{eq:PDE_of_interest}. In the plot on the left we show how the MLMC algorithm automatically selects the optimal number of samples $N_{\tilde{\ell}}$ on each level to achieve a given tolerance $\varepsilon$. Note that the MLMC routine uses the second mesh in the hierarchy described in Table \ref{tab:table1} to define the first level $\tilde{\ell}$. The first mesh in Table \ref{tab:table1} is dropped since it is too coarse and it would not bring any significant advantage to the performance of MLMC \cite{giles2015multilevel}. In the plot on the right we compare the efficiency of MLMC with standard MC for different tolerances. The savings of MLMC with respect to standard Monte Carlo are considerable.}
	\label{fig:mlmc_convergence}
\end{figure}

Finally, we consider the convergence of MLMC with $p$-refinement. We follow the same procedure as in the $h$-refinement case and solve \eqref{eq:PDE_of_interest} after approximating the coefficient $u$ by solving \eqref{eq:white_noise_eqn} with FEM. This time, however, we fix the mesh to be the coarsest mesh in the 2D hierarchy (cf. Table \ref{tab:table1}) and we consider a hierarchy of continuous piecewise polynomial elements of increasing polynomial degree $p=1,\dots,8$. We investigate the convergence behavior for different values of $\nu$, namely $\nu\in\{1,7,31\}$ (corresponding to $k\in\{1,3,15\}$).

We observe in Figure \ref{fig:p-refinement} that convergence is geometric (the error decreases exponentially as the polynomial degree $p$ grows). The solution of \eqref{eq:PDE_of_interest} is actually almost surely not analytic and we would therefore expect algebraic convergence (i.e.~the error decreases polynomially as $p$ grows) \cite{babuska1994}. We hypothesize that this better-than-expected convergence is in fact pre-asymptotic behavior and that the geometric convergence will eventually plateau and switch to a slower algebraic rate that depends on the smoothness of $u$ (the larger $\nu$, the faster the convergence) \cite{Hackbusch1992,Schwab1998}. However, apart from the $\nu=1$ case for which the convergence plot begins to tail off, this is not observed for the polynomial degrees considered. We note that the larger the smoothness parameter $\nu$ is, the faster the expected value converges. The variance convergence order, on the other hand, seems to be unaffected by the value of $\nu$.

\begin{figure}[h!]
	\centering
	\includegraphics[trim=3cm 0cm 4cm 1cm, clip=true, width=.8\textwidth]{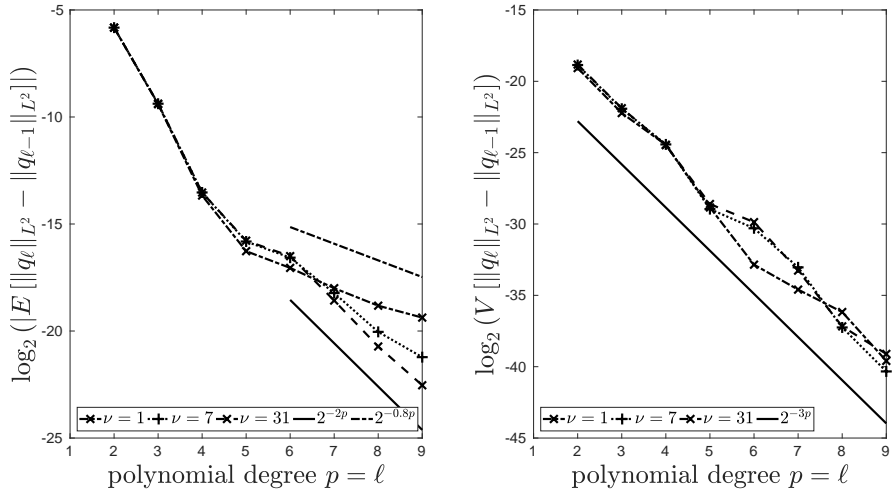}
	\centering
	\caption{Convergence behavior of the FEM approximation to the solution of \eqref{eq:PDE_of_interest} with $p$-refinement in 2D. The approximate FEM solution $q_\ell$ on level $\ell$ is obtained by using Lagrange elements of degree $p=\ell$. For the polynomial degrees considered we are only able to observe a pre-asymptotic behavior in which the convergence is geometric. The straight lines (dashed and full) in the left plot indicate the estimated convergence order of the expected value (for $\nu=1$ and $\nu=31$ respectively). The straight line in the right plot indicates the estimated convergence order of the variance for all the values of $\nu$ considered.}
	\label{fig:p-refinement}
\end{figure}

\section{Conclusions}
\label{sec:conclusions}

In this work, we have presented a new sampling technique for efficient
computation of the action of white noise realizations, even when
coupled samples are required within an MLMC framework. This technique
applies for general $L^2$-conforming finite element spaces, and allows
the coupling of samples between non-nested meshes without resorting to
a computationally costly interpolation or projection step. The
numerical results show that our technique works well in practice: the
convergence orders observed agree with existing theory, the number of
supermesh elements grows linearly with the finer parent mesh size, the
covariance structure of the sampled fields converges to the exact
Mat\'ern covariance and the consistency of the telescoping sum is
respected. We note as a concluding remark that our sampling technique
is not limited to Mat\'ern field sampling, but extends naturally to
any application in which spatial white noise realizations are needed
within a finite element framework.

\section*{Acknowledgments}
The authors would like to acknowledge useful discussions with Endre S\"uli, Andrew J. Wathen, Abdul-Lateef Haji-Ali and Alberto Paganini. The authors would also like to express their thanks to James R. Maddison for his assistance with the implementation of an interface between libsupermesh and FEniCS.

\printbibliography

\end{document}